\newtheorem{theorem}{Theorem}[section]
\newtheorem{proposition}[theorem]{Proposition}
\newtheorem{lemma}[theorem]{Lemma}
\theoremstyle{definition}
\newtheorem{definition}[theorem]{Definition}
\newtheorem{remark}[theorem]{Remark}
\let\phi\varphi
\newcommand{\gdots}{, \, \dots, \,}
\newcommand{\cat}[1]{${\rm CAT}(#1)$}
\newcommand{\ice}{\textnormal{ICE }}
\title{A gluing theorem for negatively curved complexes}
\author{Samuel Brown\thanks{Department of Mathematics, University College London, Gower Street, London, WC1E 6BT,  s.brown.12@ucl.ac.uk.}\thanks{MSC classes: 20F65, 20F67.} \thanks{This work was supported by the EPSRC.}}
\begin{document}

\maketitle 

\begin{abstract}
A simplicial complex is called \emph{negatively curved} if all its simplices are isometric to simplices in hyperbolic space, and it satisfies Gromov's Link Condition. We prove that, subject to certain conditions, a compact graph of spaces whose vertex spaces are negatively curved 2-complexes, and whose edge spaces are points or circles, is negatively curved. As a consequence, we deduce that certain groups are \cat{-1}. These include hyperbolic limit groups, and hyperbolic groups whose JSJ components are fundamental groups of negatively curved 2-complexes---for example, finite graphs of free groups with cyclic edge groups. 
\end{abstract}

\section{Introduction}

A common theme in geometric group theory is to understand the consequences of negative and non-positive curvature. For finitely generated infinite groups, the most widely studied definition of negative curvature is the notion of $\delta$-hyperbolicity, as originally studied by Gromov \cite{gromov87}. A group is called $\delta$-hyperbolic if it acts geometrically on a $\delta$-hyperbolic metric space---in which negative curvature is measured using the $\delta$-thinness condition on geodesic triangles---and a group is called hyperbolic if it is $\delta$-hyperbolic for some $\delta$. A stronger notion of negative curvature, also measured by geodesic triangles, is the \cat{-1} condition, which requires that all geodesic triangles be thinner than the corresponding triangles in the hyperbolic plane. Similarly, the \cat{0} condition requires that triangles are thinner than corresponding Euclidean triangles; a space which is \cat{-1} is also \cat{0}.

\begin{sloppypar}
For metric spaces, the interplay between these definitions is well understood: every \mbox{\cat{-1}} space is hyperbolic, while (proper, cocompact) \cat{0} spaces are hyperbolic whenever they do not contain an isometrically embedded Euclidean plane \cite{bh}. The story for groups is much less clear; while \cat{-1} groups (that is, groups possessing a geometric action on a \cat{-1} space) are all hyperbolic, it is an open question whether all hyperbolic groups are \cat{-1}, or even \cat{0}. The difficulty arises because the \cat{k} condition requires negative curvature at a small scale, whereas hyperbolicity is only sensitive to the geometry at larger scales. A strong form of this question appears as Question 1.5 in Bestvina's famous list \cite{bestvinalist}, credited to Michael Davis; it asks whether the Rips complex $P_R(G)$ of a hyperbolic group $G$ possesses a negatively curved metric for sufficiently large $R$. A positive answer would not only show that all hyperbolic groups were \cat{-1}, but would give an explicit description of the \cat{-1} space and the action.

The main result of interest in this paper is to settle the question for the case of \emph{limit groups}---a class of groups introduced by Sela \cite{sela2001} and widely studied due to their usefulness in understanding homomorphisms from a finitely generated group to a free group (for more details, see section \ref{sec:lg} and the references there). Limit groups were shown in \cite{alibegovicbestvina06} to be \cat{0}, and we improve this to the following:

\newtheorem*{thminta}{Theorem \ref{thm:limcat-1}}
\begin{thminta}
	Let $G$ be a limit group. Then $G$ is \cat{-1} if and only if $G$ is hyperbolic.
\end{thminta}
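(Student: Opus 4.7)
The reverse direction (CAT$(-1)$ implies hyperbolic) is immediate from the general fact recalled in the introduction that every \cat{-1} space is hyperbolic. For the forward direction, the plan is to proceed by induction on the hierarchical complexity of a limit group.

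First I would recall the standard structural description of limit groups as a hierarchy built from free groups by a finite sequence of constructions---typically free products, amalgamations over maximal cyclic subgroups, and centralizer extensions (the \ice construction hinted at by the macro in the preamble). In the hyperbolic case no subgroup is isomorphic to $\zxz$, so the centralizers being extended are all infinite cyclic, and every amalgamation one meets is over an infinite cyclic (or trivial) edge group. Each level of the hierarchy therefore expresses the current group as the fundamental group of a compact graph of groups whose edge groups are trivial or infinite cyclic, and whose vertex groups are hyperbolic limit groups of strictly smaller hierarchical height.

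The induction then runs as follows. For the base case, a finitely generated free group is \cat{-1} as it acts geometrically on its Cayley tree (equivalently, is the fundamental group of a negatively curved 1-complex, viewed as a degenerate 2-complex). For the inductive step, assume each vertex group in the decomposition is already realised as the fundamental group of a compact negatively curved 2-complex. The main gluing theorem advertised in the abstract---which takes a compact graph of spaces whose vertex spaces are negatively curved 2-complexes and whose edge spaces are points or circles and, subject to certain conditions, produces a negatively curved total space---then yields a compact negatively curved 2-complex with fundamental group $G$. Since a negatively curved 2-complex is \cat{-1} and carries a geometric $G$-action on its universal cover, $G$ is \cat{-1}.

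The main obstacle will be verifying the hypotheses of the gluing theorem at every stage of the induction. Concretely, for each cyclic edge group one must realise its two images in the adjacent vertex groups by locally geodesic loops in the vertex 2-complexes, and one must ensure that the resulting link condition at the attaching points holds after gluing. This will require both (i) that the cyclic subgroups arising in the splittings be represented by closed geodesics---available because infinite cyclic subgroups of a hyperbolic group are quasiconvex and can be straightened---and (ii) a careful choice of metric on each vertex complex (rescaling, subdividing, or replacing attaching loops by homotopic geodesics of controlled length) so that the attaching curves are long enough relative to the local geometry to prevent short essential loops appearing in the links of the new edge vertices. Coordinating these length and link conditions across the whole hierarchy, so that the negative curvature is preserved through every inductive step, is where the real work lies.
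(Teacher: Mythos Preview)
Your overall strategy matches the paper's: embed the limit group in an \ice group, induct on height, and at each level apply the gluing theorem (Theorem~\ref{thm:main}) to the induced graph of spaces. The base case and the reduction from ``hyperbolic'' to ``contains no $\mathbb{Z}^2$'' are exactly as in the paper.

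Where your proposal goes astray is in the identification of the main obstacle. The issues you list under (i) and (ii)---straightening attaching loops to closed geodesics, and adjusting metrics so that links remain \cat{1} after gluing---are precisely what the gluing theorem itself handles internally (see Remark~\ref{rmk:thmconds} and the proof of Theorem~\ref{thm:main}). You do not need to redo that work; you need only feed the theorem its hypotheses. The one hypothesis that requires genuine verification, and which you do not mention, is condition~\ref{item:malfam}: for each type~M vertex group, the incident cyclic edge groups must form a \emph{malnormal family}. This is where the paper does the real work in Theorem~\ref{thm:limnc}, using the CSA property of limit groups (Theorem~\ref{thm:limgpprops}, parts~\ref{item:csa} and~\ref{item:csaproper}) together with the fact that edge groups coming from the \ice construction are maximal cyclic (Remark~\ref{rmk:limgpmaxcyclic}), and an argument that distinct edge groups are non-conjugate in the vertex group even though they are conjugate in the ambient \ice group. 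Without establishing malnormality, Lemma~\ref{lemma:transverse} is unavailable and the gluing theorem cannot be invoked. So your outline is correct, but the ``real work'' you anticipate is in the wrong place.
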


\end{sloppypar}
By a \emph{negatively curved simplicial complex}, we mean a simplicial complex whose simplices are modelled on simplices in (possibly rescaled) hyperbolic space. In order to prove Theorem \ref{thm:limcat-1}, we devise a gluing theorem (Theorem \ref{thm:main}) for two-dimensional negatively curved simplicial complexes, in the spirit of the gluing theorems presented in \cite[II.11]{bh}. Essentially, we would like to take two negatively curved 2-complexes, glue them together along a tube, and find a hyperbolic metric on the resulting complex. Imposing a hyperbolic metric on the tube is problematic, since hyperbolic annuli have a non-geodesic boundary component. To get around this issue, we modify the metric on the two complexes, concentrating negative curvature at vertices, and giving ``room'' to glue in the tube. Care is needed to ensure that the pieces we glue on do not themselves combine to give positive curvature, and this is dealt with in Lemma \ref{lemma:transverse}. 

The most closely related theorem to ours in the literature is Bestvina and Feighn's gluing theorem \cite{bestvinafeighn92} for $\delta$-hyperbolic spaces, and thus for hyperbolic groups. Our theorem is complementary to theirs; our hypotheses are stronger, but so is our conclusion. 

We present the proof of the gluing theorem in section \ref{sec:mainthm}, and then in section \ref{sec:lg} we introduce limit groups. After giving some background and listing some useful properties, we will exploit the rich structure theory of limit groups to allow us to apply our gluing theorem, and hence to prove Theorem \ref{thm:limcat-1}. In section \ref{sec:apps} we provide two more applications of the gluing theorem, showing that it can be applied to the JSJ decomposition of a torsion-free hyperbolic group (Theorem \ref{prop:jsjnc}), and deducing that graphs of free groups with cyclic edge groups are \cat{-1} (Theorem \ref{thm:gofg}). A consequence of the application to the JSJ decomposition, together with the Strong Accessibility Theorem of Louder--Touikan \cite{loudertouikan13} (see also \cite{delzantpotyagailo01}), is that we may reduce the question of whether a hyperbolic group is 2-dimensionally \cat{-1} to its rigid subgroups (those which do not admit a non-trivial free or cyclic splitting).


Our gluing theorem will be expressed using the language of \emph{graphs of spaces}, as formalised by \cite{scottwall79}, and this is where we begin.
\section{Graphs of spaces and CAT(\emph{k}) geometry}

\subsection{Graphs of spaces}\label{ss:gos}

Recall that a \emph{graph} $\Gamma$, as defined by Serre \cite{trees}, consists of a set $V=V(\Gamma)$ of \emph{vertices}, a set $E=E(\Gamma)$ of \emph{edges}, and maps $\iota \colon E \rightarrow V$, $\tau \colon E \rightarrow V$, $\bar{\,} \colon E \rightarrow E$ satisfying $\bar{e} \neq e$, $\bar{\bar{e}}=e$ and $\tau(e)=\iota(\bar{e})$. The corresponding topological graph, also denoted by $\Gamma$, has vertices $V$ and one edge $e=\left\{\iota(e), \tau(e) \right\}$ for each edge pair $(e, \bar{e})$.

Throughout, if not explicitly specified, we will assume all our spaces are CW complexes (they will usually be explicit polyhedral complexes). 

\begin{definition}
	A \emph{graph of spaces} $\left( X, \Gamma_X \right)$ consists of the following:
	\begin{enumerate}
		\item A graph $\Gamma_X$, called the \emph{underlying graph}.
		\item For each vertex $v$ of $\Gamma_X$, a connected \emph{vertex space} $X_v$.
		\item For each edge pair $(e, \bar{e})$ of $\Gamma_X$, a connected \emph{edge space} $X_e=X_{\bar{e}}$ and a pair $\left( \partial_e, \partial_{\bar{e}} \right)$ of $\pi_1$-injective \emph{attaching maps} from $X_e$ to $X_{\iota(e)}$, $X_{\tau(e)}\left(=X_{\iota(\bar{e})}\right)$ respectively.
	\end{enumerate}
\end{definition}

Given the above data, we associate a space $X$, called the \emph{total space}, as follows. Take a copy of $X_v$ for each $v$, and a copy of $X_e \times [0,1]$ for each edge pair $\left(e,\bar{e}\right)$. Now glue $X_e \times \{0\}$ to $X_{\iota(e)}$ using $\partial_e$, and glue $X_e \times \{1\}$ to $X_{\tau(e)}$ using $\partial_{\bar{e}}$. The edge space $X_e$ is embedded inside $X$ as $X_e \times \{\frac{1}{2}\}$. We will occasionally refer to $X_e \times [0,1]$ as an \emph{edge cylinder}.

We will say that the total space $X$ has a \emph{graph of spaces decomposition}, or simply is a \emph{graph of spaces}. The ambiguity is the potential existence of two different graph of spaces decompositions of a given topological space, but that will not concern us here. 

To each graph of spaces, we may naturally associate a graph of groups with the same underlying graph, by replacing vertex and edge spaces with their fundamental groups and the attaching maps with the induced maps on fundamental groups (after choosing basepoints). The fundamental group of the graph of groups (as defined in \cite{trees}) is then isomorphic to the fundamental group of the total space. When we discuss \emph{vertex groups} and \emph{edge groups} of a graph of spaces, we are referring to the vertex and edge groups of this corresponding graph of groups. A \emph{splitting} is a synonym for ``graph of groups decomposition''.

Given a graph of spaces $X$, there is a natural projection $\phi_X \colon X \rightarrow \Gamma_X$ which maps each vertex space $X_v$ to the vertex $v$, and each copy of $X_e \times [0,1]$ to the edge $e$ by projection onto the second factor. A map $f \colon X \rightarrow X'$ of graphs of spaces is one that respects the graph of spaces decomposition, in the sense that there is a graph homomorphism $\gamma \colon \Gamma_X \rightarrow \Gamma_{X'}$ such that $\gamma \circ \phi_X = \phi_{X'} \circ f$. 


If $X$ is a graph of spaces and $\hat{X} \rightarrow X$ is a covering map, then $\hat{X}$ inherits a graph of spaces structure. The vertex and edge spaces are preimages of vertex and edge spaces of $X$, and the restriction of the covering map to a vertex (or edge) space of $\hat{X}$ is a covering map to the corresponding vertex (or edge) space of $X$.

There may be many graphs of spaces associated to the same graph of groups, but fortunately we have strong control over their homotopy type, thanks to the following lemma (a consequence of Whitehead's theorem, and the fact that a graph of aspherical spaces is aspherical):

\begin{lemma}[\cite{scottwall79}]\label{lemma:goshe}
	Let $X$ and $Y$ be two graphs of spaces associated to the same graph of groups. Suppose all the vertex and edge spaces of $X$ and $Y$ are aspherical. Then $X$ and $Y$ are homotopy equivalent. 
\end{lemma}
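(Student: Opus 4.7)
The plan is to follow the parenthetical hint: show that both $X$ and $Y$ are aspherical, note that they share the same fundamental group, and then invoke the standard classification of $K(G,1)$ CW complexes up to homotopy.

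First I would prove the asphericity claim: if all vertex and edge spaces are aspherical, then the total space of the graph of spaces is aspherical. To do this, I would pass to the universal cover $\widetilde{X}$ and show it is contractible. The covering space remark just before the lemma already tells us that $\widetilde{X}$ inherits a graph of spaces structure, whose vertex and edge spaces are covers of the corresponding spaces in $X$. Since the $\pi_1$-injective attaching maps lift compatibly, the components of the preimages of vertex and edge spaces can be identified (as in Bass--Serre theory) with the universal covers of the corresponding pieces of $X$; these are contractible by hypothesis. Moreover, the combinatorics of how these pieces are glued in $\widetilde{X}$ is recorded by the Bass--Serre tree $T$ of the associated graph of groups. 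Thus $\widetilde{X}$ deformation retracts onto a space built out of contractible pieces glued along contractible pieces in a tree pattern, and a standard Mayer--Vietoris / nerve argument (or inductive collapse along the tree) shows this is contractible. Consequently $\pi_n(X)=0$ for $n\geq 2$, and similarly for $Y$.

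Next, as recorded earlier in the section, the fundamental group of a graph of spaces is canonically isomorphic to the fundamental group $G$ of the associated graph of groups. Hence $\pi_1(X)\cong G\cong \pi_1(Y)$, and both $X$ and $Y$ are CW complexes of type $K(G,1)$.

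Finally, I invoke the classical consequence of Whitehead's theorem that any two $K(G,1)$ CW complexes are homotopy equivalent. Concretely: choose basepoints and an isomorphism $\pi_1(X)\to \pi_1(Y)$; extend cellularly, using asphericity of $Y$ to kill obstructions on each cell of dimension $\geq 2$, to obtain a map $f\colon X\to Y$ realizing this isomorphism. Then $f$ induces an isomorphism on $\pi_1$ by construction, and on $\pi_n$ for $n\geq 2$ trivially, since both groups vanish. By Whitehead's theorem $f$ is a homotopy equivalence.

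The only genuinely non-formal step is the asphericity of a graph of aspherical spaces, i.e.\ contractibility of $\widetilde{X}$; once that is in hand, the rest is standard obstruction theory. I would expect this to be the main (and essentially only) obstacle, and it is handled by the tree-of-spaces description of $\widetilde{X}$ coming from Bass--Serre theory together with the $\pi_1$-injectivity of the attaching maps.
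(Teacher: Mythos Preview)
Your proposal is correct and follows precisely the approach indicated by the paper, which does not give a proof but merely notes that the lemma is ``a consequence of Whitehead's theorem, and the fact that a graph of aspherical spaces is aspherical'' and cites \cite{scottwall79}. You have simply fleshed out this hint in the expected way: establish asphericity of the total space via the tree-of-spaces description of the universal cover, and then apply the uniqueness of $K(G,1)$ complexes up to homotopy.
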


In some accounts of the theory of graphs of groups and graphs of spaces (for example \cite{hatcherat}), a graph of groups is chosen first and then a graph of spaces is constructed by choosing a $K \left(G_v, 1\right)$ for each vertex group $G_v$, a $K \left(G_e, 1\right)$ for each edge group $G_e$, and then realising the attaching maps by $\pi_1$-injective maps between these spaces. The above lemma then essentially says that this construction is well-defined up to homotopy. In particular, we can safely replace vertex and edge spaces of a graph of spaces with homotopy equivalent spaces, and attaching maps with freely homotopic maps, without changing the homotopy type of the graph of spaces. We will make use of this fact regularly. 

\subsection{Metrizing graphs of spaces}\label{ss:mgos}

A priori, a graph of spaces $X$ is not equipped with a metric. However, if we have a metric on each vertex and edge space, then we may metrize the cylinders $X_e \times [0,1]$ using the product metric (with the standard metric on $[0,1]$), and then the quotient pseudometric on $X$ will be a true metric if the attaching maps are suitably nice. A sufficient condition is that they be \emph{local isometries}, in the sense that each point in an edge space $X_e$ has a neighbourhood on which $\partial_e$ restricts to an isometric embedding.

Even in the case that the attaching maps are not local isometries---or even when metrics on the edge spaces are not specified---a graph of spaces may possess other metrics. Indeed, Theorem \ref{thm:main} defines a metric on a graph of spaces which does not come from a product metric on the edge space cylinders.



\subsection{Malnormality}\label{ss:malfam}

The following is an algebraic criterion for families of subgroups of a given group, and it will be a crucial hypothesis for our gluing theorem.

\begin{definition}
	A subgroup $H$ of a group $G$ is called \emph{malnormal} if $x^{-1}Hx \cap H=\{1\}$ whenever $x \notin H$. A family of subgroups $H_1, \dotsc, H_n$ of $G$ is a \emph{malnormal family} if each $H_i$ is malnormal, and in addition $x^{-1}H_{i}x \cap y^{-1}H_{j}y=\left\{1\right\}$ for any $i \neq j$ and any $x$, $y \in G$.
\end{definition}

When describing hierarchies of groups in various contexts, it is common to see the hypothesis that incident edge groups should form a malnormal (or \emph{almost malnormal}, where finite intersections are allowed) family of subgroups in each vertex group. This is seen, for example, in Wise's hierarchy for virtually special groups \cite{wisenotes, wisemanu}, and as a special case in \cite{hsuwise10}. It is stronger than the ``annuli flare'' condition used by Bestvina and Feighn in their gluing theorem for $\delta$-hyperbolic spaces \cite{bestvinafeighn92}. The geometric consequences of malnormality for a subgroup are not fully understood (in particular, the circumstances under which it implies quasiconvexity), and it may be possible to replace it with a weaker assumption in future---see Remark \ref{rmk:malfam} for more details.

\subsection{CAT(\emph{k}) spaces and groups}\label{ss:catk}

The \cat{k} criterion gives a method for describing the curvature of a metric space by comparing triangles in the space to triangles in a space of fixed curvature. We will recall the basics for the convenience of the reader, but precise details can be found in \cite{bh}. 

Following \cite[Definition I.2.10]{bh}, for any $k<0$, we denote by $M^n_k$ the space obtained by multiplying the metric on $n$-dimensional hyperbolic space $\mathbb{H}^n$ by $1/\sqrt{-k}$. For $k=0$, $M^n_k$ denotes Euclidean space $\mathbb{E}^n$, and for $k<0$, $M^n_k$ denotes the rescaled sphere obtained by multiplying the metric on the $n$-dimensional unit sphere $\mathbb{S}^n$ by $1/\sqrt{k}$.

\begin{definition}
For $k \leq 0$, a geodesic metric space $Y$ is called \cat{k} if any geodesic triangle in $Y$ is \emph{thinner} (see Figure \ref{fig:catk}) than the triangle with the same side lengths in $M^2_k$. For $k > 0$, we require this only for triangles of perimeter less than $2\pi/\sqrt{k}$ (twice the diameter of the sphere $M^2_k$). 
\end{definition}

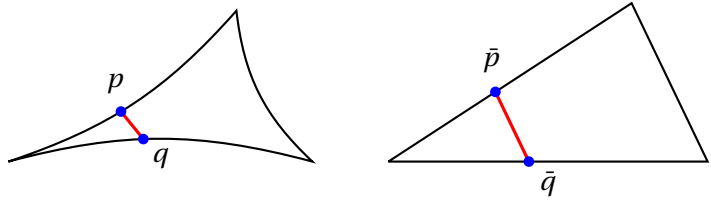
\begin{figure}[h]
	\centering
	\begin{tikzpicture}
	\draw[thick]
	(0,0) to[bend right=15] coordinate[pos=0.44] (A) node[above left] {$p$} (3,2)  to[bend right=20] (4,0) to[bend right=15] coordinate[pos=0.56] (B) node[below] {$q$}  (0,0) ; 
	\draw[very thick, red]
	(A) -- (B);
	\fill[blue] (A) circle (2pt);
	\fill[blue] (B) circle (2pt);
	\draw[thick]
	(5,0) to coordinate[pos=0.44] (C) node[above left] {$\bar{p}$} (8.2,2.1) -- (9.2,0) to coordinate[pos=0.56] (D) node[below] {$\bar{q}$} (5,0) ; 
	\draw[very thick, red]
	(C) -- (D);
	\fill[blue] (C) circle (2pt);
	\fill[blue] (D) circle (2pt);		
	\end{tikzpicture} 
	\caption{The left hand triangle is thinner than the right, as $d(p,q) \leq d(\bar{p},\bar{q})$ for any choice of points $p$ and $q$. Note that equality is permitted.}
	\label{fig:catk}
\end{figure}

\begin{definition}
	A space is said to be $\emph{locally}$ \cat{k} if every point has a neighbourhood which is \cat{k}.
\end{definition}

\begin{remark}\label{rmk:rescaling}
	If a space is (locally) \cat{k} for some $k<0$, then by multiplying the metric by $\sqrt{-k}$ we obtain a (locally) \cat{-1} space. That is, up to rescaling the metric (in particular, up to homeomorphism and homotopy equivalence), a space which is (locally) \cat{k} for some $k<0$ is (locally) \cat{k} for all $k<0$. 
\end{remark}

\begin{definition}
	An \emph{$M^n_k$-simplex} is the convex hull of $n+1$ points in general position in $M^n_k$. These $n+1$ points are the vertices of the simplex, and its faces are precisely the lower dimensional simplices obtained by taking the convex hull of a subset of the vertices.
\end{definition}

\begin{remark}\label{rmk:compsimplex}
	Let $k<0$ and let $S$ be an $M^2_k$-simplex with 1-skeleton $S^{(1)}$. Then for any $k'<0$, there exists a $M^2_{k'}$-simplex $S'$ with 1-skeleton $S'^{(1)}$ isometric to $S^{(1)}$. Each angle of $S'$ is strictly larger than the corresponding angle of $S$ if $k<k'<0$, and strictly smaller if $k'<k<0$. We say $S'$ is a \emph{comparison simplex for $S$ of curvature $k'$} (or a \emph{comparison $M^2_{k'}$-simplex} for $S$).
\end{remark}


\begin{definition} \sloppy
	An \emph{$M_k$-simplicial complex} is a geometric simplicial complex, whose $n$-simplices are $M^n_k$-simplices glued by isometries of their faces.
\end{definition}
	An $M_k$-simplicial complex $K$ has a well-defined notion of angle, inherited from the isometries between each simplex and $M^n_k$. In particular, we can define angle inside each simplex using the standard angle in $M^n_k$. This induces a metric on the links of vertices inside simplices. When we glue together simplices, we glue by isometries of faces; the same holds for links, so we get a well defined path metric on the links of vertices in $K$. This in turn gives a notion of angle at a vertex between simplicial paths in $K$. The length of an edge of $\text{link}(v,K)$ is equal to the angle at $v$ in the corresponding 2-simplex of $K$. Note that the angle between two geodesics issuing from a vertex may be greater than $\pi$, and in the case where the link is disconnected, it may take the value $\infty$. For more details, see \cite{bh}.

\begin{remark}
	A simplicial path $\gamma$ in $K$ is locally geodesic if and only if, at each vertex $x$ along the path, the angle between the incoming and outgoing edges of $\gamma$ at $x$ is at least $\pi$. We call this the angle \emph{subtended} by $\gamma$ at $x$. Equivalently, $\gamma$ is locally geodesic if the distance in $\text{link}(v,K)$ between the two points corresponding to the incoming and outgoing edges of $\gamma$ is at least $\pi$. A closed simplicial path which is locally geodesic is called a \emph{closed geodesic}.
\end{remark}

\begin{theorem}[The Link Condition]
	Let $K$ be an $M_k$-simplicial complex. Then $K$ is locally \cat{k} if and only if for each vertex $v \in K$, $\text{link}(v,K)$ is \cat{1}.
\end{theorem}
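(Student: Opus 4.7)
The plan is to prove this local characterisation by reducing to the behaviour of small metric balls around vertices, since for interior points of top-dimensional simplices, a neighbourhood is isometric to an open subset of $M^n_k$ and is trivially locally \cat{k}, while for points in lower-dimensional faces that are not vertices one may use an inductive argument together with a local product-like structure to reduce to the vertex case. The key structural observation is that a sufficiently small metric ball $B(v, \epsilon)$ around a vertex $v$ is isometric to the $\epsilon$-ball around the cone point in the $k$-cone $C_k(\text{link}(v,K))$; this follows directly from the $M_k$-simplicial structure and the definition of angle in each simplex (the hyperbolic/Euclidean/spherical law of cosines recovers distances in $B(v, \epsilon)$ from lengths of edges in $\text{link}(v,K)$, which are themselves angles at $v$).

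Given this identification, both directions follow from Berestovskii's theorem: for $k \leq 0$, the $k$-cone $C_k(Y)$ over a metric space $Y$ is \cat{k} if and only if $Y$ is \cat{1}. For the ``if'' direction I would argue as follows: assuming each link is \cat{1}, apply Berestovskii's theorem to conclude that the cone neighbourhood $B(v,\epsilon) \cong B_{C_k(\text{link}(v,K))}(\epsilon)$ is \cat{k}, and combine this with the simpler cases (interior of simplices, interior of codimension-$\geq 1$ faces) to conclude $K$ is locally \cat{k}.

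For the converse direction, assume $K$ is locally \cat{k} at $v$ and let $T$ be a geodesic triangle in $\text{link}(v,K)$ of perimeter less than $2\pi$. By rescaling, choose $\epsilon$ small enough that a comparison triangle of vertices at distance $\epsilon$ from $v$ in the cone lies in the locally \cat{k} ball $B(v,\epsilon)$. The \cat{k} comparison for this triangle in $K$ translates, via the law of cosines relating angles at $v$ (i.e.\ distances in the link) to distances in $K$, into the \cat{1} comparison for $T$. Taking $\epsilon \to 0$ removes dependence on the rescaling constant and gives the \cat{1} property for $\text{link}(v,K)$.

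The main obstacle I expect is justifying rigorously that distances in the small ball $B(v,\epsilon)$ genuinely match the $k$-cone metric on $C_k(\text{link}(v,K))$. Because $\text{link}(v,K)$ is itself a piecewise spherical simplicial complex whose path metric may be realised only by broken geodesics crossing several simplices (and may even be $\infty$ on different components), one must check that short local geodesics in $K$ near $v$ correspond bijectively to geodesic paths in the link, with matching length via the appropriate law of cosines. Once this correspondence is nailed down (essentially by concatenating the inter-simplex isometries that define the $M_k$-structure), the reduction to Berestovskii's cone theorem is clean, and the two directions follow symmetrically. Finally, by Remark \ref{rmk:rescaling}, it suffices to verify the argument for one value of $k$ in each sign regime, so the proof can be organised uniformly in $k$.
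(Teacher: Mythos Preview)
The paper does not actually prove this theorem: it is stated without proof as a standard result from the literature (the surrounding discussion refers the reader to \cite{bh} for details on $M_k$-complexes, links, and the associated angle metric). So there is no ``paper's own proof'' against which to compare your attempt.

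That said, your sketch is essentially the standard argument found in Bridson--Haefliger: identify a small ball around a vertex with a ball in the $k$-cone over the link, and then invoke Berestovskii's theorem characterising when such cones are \cat{k}. The main technical point you correctly flag---that the induced path metric on the small ball really agrees with the cone metric, even when link geodesics are broken across several simplices---is the genuine content, and it is handled in \cite{bh} under a finiteness hypothesis on the set of shapes (which the present paper assumes via local finiteness; see Remark~\ref{rmk:deltafinite}). One small caveat: your final appeal to Remark~\ref{rmk:rescaling} to reduce to one value of $k$ per sign regime is not quite right, since rescaling changes both $k$ and the edge lengths simultaneously, so it does not let you fix $k$ while keeping the complex fixed; the argument is better organised uniformly in $k$ from the start, as in \cite{bh}.
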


\begin{remark}\label{rmk:2dcat-1}
	In the case where $K$ is 2-dimensional, the links of vertices are metric graphs, which are \cat{1} precisely when they do not contain any essential loops of length less than $2\pi$. This makes it particularly easy to check the link condition in the 2-dimensional case. 
\end{remark}

\begin{remark}\label{rmk:cat1}
	Given a \cat{1} space $X$, the space obtained by connecting two points $x$, $y \in X$ with an arc of length $l$ is \cat{1} if and only if $d(x,y)+l \geq 2\pi$. This is because no new non-degenerate triangles of perimeter $<2\pi$ are introduced. This is a useful fact when checking that links remain \cat{1}---and hence, spaces remain locally \cat{k} for some $k$---after gluing operations.
\end{remark}

For $k<0$, we will refer to an $M_{k}$-simplicial complex which is locally \cat{k} as a \emph{negatively curved simplicial complex (of curvature $k$)}, or just a \emph{negatively curved complex}. From now on, we will assume that all complexes are locally finite (see Remark \ref{rmk:deltafinite}), and will be concerned only with 2-dimensional complexes (see Remark \ref{rmk:error}).

\begin{definition}\label{def:compcomplex}
	If $K$ is a negatively curved simplicial 2-complex, then we may form a combinatorially isomorphic complex $K'$ by replacing each $M^2_k$-simplex $S$ with the comparison simplex $S'$ of curvature $k'$, where $k<k'<0$. Then $K'$ is an $M_{k'}$-simplicial 2-complex, which we call the \emph{comparison $M_{k'}$-complex} for $K$. Note that gluing maps between faces of simplices remain isometries, and so this is a well-defined $M_{k'}$-simplicial complex.
\end{definition}

There is a natural combinatorial isomorphism $' \colon K \rightarrow K'$, which can be realised by a homeomorphism. We also use the same notation for the induced combinatorial isomorphism $' \colon \text{link}(v,K) \rightarrow \text{link}(v',K')$ for each vertex $v \in K^{(0)}$.

\begin{lemma}
	Let $K$ and $K'$ be as in Definition \ref{def:compcomplex}. Then $K'$ is locally \cat{k'}; in particular, it is a negatively curved 2-complex.
\end{lemma}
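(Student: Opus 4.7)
The plan is to verify the Link Condition for $K'$, i.e.\ that $\text{link}(v',K')$ is \cat{1} for every vertex $v' \in K'^{(0)}$, and then conclude by the Link Condition that $K'$ is locally \cat{k'}. Since $K'$ is 2-dimensional, Remark \ref{rmk:2dcat-1} reduces checking \cat{1}-ness of a link to verifying that no essential loop in it has length $<2\pi$. So the whole argument reduces to comparing lengths of loops in $\text{link}(v,K)$ and $\text{link}(v',K')$ under the combinatorial isomorphism $'$.

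First, I would fix a vertex $v \in K^{(0)}$ and its image $v' \in K'^{(0)}$. Recall that the edges of $\text{link}(v,K)$ are in bijection with the 2-simplices $S$ of $K$ containing $v$, and the length of such an edge equals the angle of $S$ at $v$. The combinatorial isomorphism $'$ sends each such edge to the edge of $\text{link}(v',K')$ corresponding to the comparison simplex $S'$, whose length is the angle of $S'$ at $v'$. By Remark \ref{rmk:compsimplex}, since $k<k'<0$, the angle of $S'$ at $v'$ is strictly larger than the angle of $S$ at $v$. Hence every edge of $\text{link}(v',K')$ is strictly longer than its preimage under $'$, and therefore any combinatorial loop $\gamma'$ in $\text{link}(v',K')$ satisfies $\ell(\gamma') \geq \ell(\gamma)$, where $\gamma$ is the corresponding loop in $\text{link}(v,K)$.

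Next, since $K$ is locally \cat{k}, the Link Condition gives that $\text{link}(v,K)$ is \cat{1}. By Remark \ref{rmk:2dcat-1} this means every essential loop $\gamma$ in $\text{link}(v,K)$ has length $\geq 2\pi$. Under the combinatorial isomorphism $'$, essential loops correspond to essential loops, so for any essential loop $\gamma'$ in $\text{link}(v',K')$ we get $\ell(\gamma') \geq \ell(\gamma) \geq 2\pi$. Applying Remark \ref{rmk:2dcat-1} in the other direction shows $\text{link}(v',K')$ is \cat{1}. Since $v$ was arbitrary, the Link Condition gives that $K'$ is locally \cat{k'}, as required, and in particular $K'$ is a negatively curved 2-complex.

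There is no real obstacle here; the statement is essentially a bookkeeping consequence of (i) the monotonicity of angles in comparison simplices with respect to curvature (Remark \ref{rmk:compsimplex}), and (ii) the fact that in dimension two the CAT(1) property of a link reduces to a lower bound on the lengths of essential loops. The only thing one has to be a little careful about is that the isomorphism $'$ genuinely preserves the combinatorial structure of the link (so essential loops map to essential loops, and each edge length is replaced by the corresponding angle in the comparison simplex), which is immediate from the construction of the comparison complex.
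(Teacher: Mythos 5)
Your proposal is correct and follows essentially the same route as the paper: both use Remark \ref{rmk:compsimplex} to see that the map $' \colon \text{link}(v,K) \rightarrow \text{link}(v',K')$ increases edge lengths (angles), and then invoke Remark \ref{rmk:2dcat-1} and the Link Condition to conclude $K'$ is locally \cat{k'}.
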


\begin{proof}
By Remark \ref{rmk:compsimplex}, angles at vertices in $K'$ are larger than in $K$. Since links in $K$ and $K'$ are metric graphs, it follows that the map $' \colon \text{link}(v,K) \rightarrow \text{link}(v',K')$ strictly increases the distance between pairs of points. Remark \ref{rmk:2dcat-1} then implies that links in $K'$ are \cat{1}, and hence $K'$ is locally \cat{k'}. 
\end{proof}

We quantify this as follows.
\begin{definition}\label{def:delta}
	For a negatively curved simplicial 2-complex $K$ and comparison complex $K'$, the \emph{excess angle} $\delta$ is defined by:\[
	\delta(K',K)=\inf \left\{ \theta'-\theta \right\} \]
	where $\theta$ ranges over all vertex angles of 2-simplices $S \subset K$ and $\theta'$ is the corresponding angle in the comparison simplex $S' \subset K'$.  Equivalently: \[
	\delta(K',K)=\inf \left\{ d_{\text{link}(v',K')} ( a',b' ) - d_{\text{link}(v,K)} ( a,b ) \; \middle| \; a,\,b \in \text{link}(v,K)^{(0)}, \, v \in K^{(0)} \right\}
	\]
	
	If $K$ is finite (or more generally, if the set Shapes($K$) of isometry types of simplices is finite---see \cite{bh}), then $\delta(K',K)>0$. 
\end{definition}

\begin{remark}\label{rmk:angleexcess}
	Suppose $\gamma = \left( e_1 \gdots e_n \right)$ is a locally geodesic simplicial path in $K$, with $\iota(e_i)=v_{i-1}$, $\tau(e_i)=v_i$ for $i=1 \gdots n$. Then $\gamma' = \left( e'_1 \gdots e'_n\right)$ is a local geodesic in $K'$, and for each $i=1 \gdots n-1$ the angle subtended by $\gamma$ at $v'_i$ is at least $\pi+2\delta(K',K)$; that is: \[
		d_{\text{link}(v'_i,K')} \left( e'_i,e'_{i+1} \right) \geq \pi+2\delta(K',K).
			\] In the case where $\gamma$ is a closed local geodesic, the same also holds for the angle at $v'_0=v'_n$ between $e'_n$ and $e'_1$ and so $\gamma'$ is still a closed geodesic.
\end{remark}

\begin{remark}\label{rmk:deltafinite}
	We will typically use the excess angle only in the spirit of Remark \ref{rmk:angleexcess} above. In this setting, we can relax the requirement that $K$ or Shapes($K$) is finite, provided we insist that $K$ is \emph{locally} finite. This is because we only need to consider the finitely many angles around some finite subset of $K$, such a closed geodesic or family of closed geodesics. 
\end{remark}

In order to state the group theoretic consequences of our gluing theorem, we need some further definitions.

\begin{definition}
	We say a finitely generated group acts on a space \emph{geometrically} if it acts by isometries, properly discontinuously and cocompactly. 
\end{definition}


\begin{definition}
	A group is called \emph{\cat{k}} if it acts geometrically on a \cat{k} complex. A group is called \emph{freely \cat{k}} if it acts freely and geometrically on a \cat{k} complex. 
\end{definition}

\begin{definition}
	The \emph{geometric dimension} of a group $G$ is the minimum dimension of a $K(G,1)$. The \emph{ \cat{-1} dimension} of $G$ is the minimum dimension of a compact \cat{-1} complex which is a $K(G,1)$.
\end{definition}

\begin{remark} \label{rmk:cat-1dimcpct}
	The requirement that the $K(G,1)$ be compact in the definition of \cat{-1} dimension is to ensure that a group cannot have a defined \cat{-1} dimension unless it is \cat{-1}. Indeed, a version of Rips' construction can be used to build groups which have a non-compact \cat{-1} $K(G,1)$, but which are not finitely presented (see \cite[II.5]{bh}). In particular, they are not hyperbolic, and so cannot be \cat{-1}.
\end{remark}

\begin{remark}\label{rmk:cat-1dim}
	 The \cat{-1} dimension of a group is at least the geometric dimension, but they may not be equal; Brady and Crisp \cite{bradycrisp07} give examples of groups with geometric dimension 2 but \cat{-1} dimension 3.
\end{remark}

Note that any group with finite geometric dimension is torsion-free (see \cite[Appendix F.3]{davis08}). In particular, a group with finite \cat{-1} dimension is in fact freely \cat{-1}, and a group has \cat{-1} dimension 2 if and only if it is the fundamental group of a compact negatively curved simplicial 2-complex---the main object of study in this paper. 

\subsection{Hyperbolic triangles, quadrilaterals and annuli}\label{ss:hyps}

\begin{remark}
	For any angle $0<\theta<\pi$, any $a,b>0$, and any $k \leq 0$, there exists a 2-simplex in $M^2_k$ with angle $\theta$ between two sides of length $a$, $b$. For brevity, we will sometimes refer to such a simplex as an \emph{$\left( a,\theta,b\right)$-fin}.
\end{remark}

\begin{figure}[htbp]
	\label{fig:fin}
	\centering
	\begin{tikzpicture}
	\coordinate (V1) at (0,1.5);
	\coordinate (V2) at (8,1.5);
	\coordinate (V3) at (3,0);
	\draw[very thin]
	(V1) to[bend right=6] (V2)  to[bend right=6] coordinate[pos=0.93] (A) node[below] {$b$} (V3) coordinate(V) node[above=8pt] {$\theta$} to[bend right=6] coordinate[pos=0.07] (B) node[below] {$a$}  (V1); 
	\tkzMarkAngle[draw=red, fill=red!25, size=3mm](A,V,B);
	\draw[thick]
	(V1) to[bend right=6] (V2)  to[bend right=6] (V3) to[bend right=6] (V1);
	\end{tikzpicture}
	\caption{An $\left( a,\theta,b\right)$-fin.}
\end{figure}

\begin{definition}
	A \emph{Lambert quadrilateral} is a quadrilateral in $M^2_k$ (for $k<0$) with three angles equal to $\pi/2$, as illustrated in Figure \ref{fig:lambert}. The bottom edge, called the \emph{base}, has length $a$, and the top edge, called the \emph{summit}, has length $c$. The single angle $\theta$ not equal to $\pi/2$ is called the \emph{summit angle}. 
\end{definition}

\begin{figure}[htbp]
	\centering
	\begin{tikzpicture}
	\coordinate (V1) at (0.5,0);
	\coordinate (V2) at (0,4);
	\coordinate (V3) at (4,5);
	\coordinate (V4) at (3,0);
	\draw[thick]
	(V1) to[bend right=5] coordinate[pos=0.1] (d1) coordinate[pos=0.9] (d2) (V2) to[bend right=5] coordinate[pos=0.1] (c1) coordinate[pos=0.9] (c2) (V3) to[bend right =5] coordinate[pos=0.1] (b1) coordinate[pos=0.9] (b2) (V4) to[bend right=5] coordinate[pos=0.1] (a1) coordinate[pos=0.9] (a2) (V1);
	\tkzMarkRightAngle(a2,V1,d1);
	\tkzMarkRightAngle(d2,V2,c1);
	\tkzMarkRightAngle(b2,V4,a1);
	\tkzMarkAngle[draw=red, fill=red!25, size=7mm,label={$\theta$}](c2,V3,b1);
	\draw[thick]
	(V1) to[bend right=5] (V2) to[bend right=5] node[above]{$c$} (V3) to[bend right =5] (V4) to[bend right=5] node[below]{$a$} (V1);
	\end{tikzpicture}
	\caption{A Lambert quadrilateral}	
	\label{fig:lambert}
\end{figure}
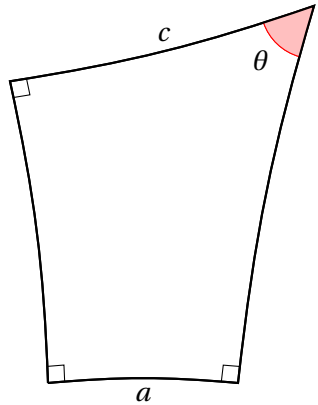

\begin{lemma}\label{lemma:lambertcosh}
	In a Lambert quadrilateral with $k=-1$, \[\sin  \theta  = \frac{\cosh a}{\cosh c}, \]
	where labels are as in Figure \ref{fig:lambert}.
\end{lemma}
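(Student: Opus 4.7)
The plan is to draw the diagonal $V_1V_3$, which decomposes the Lambert quadrilateral into two hyperbolic right triangles, and then to apply standard hyperbolic trigonometric identities. Denote by $d = |V_1V_2|$ and $b = |V_3V_4|$ the two leg lengths, and by $p = |V_1V_3|$ the diagonal length. The right angle at $V_1$ splits as $\alpha + \beta = \pi/2$ (where $\alpha = \angle V_3V_1V_2$, $\beta = \angle V_3V_1V_4$), and the summit angle splits as $\theta = \gamma + \delta$ (where $\gamma = \angle V_1V_3V_2$, $\delta = \angle V_1V_3V_4$). The triangle $T_1 = V_1V_2V_3$ is right-angled at $V_2$, and $T_2 = V_1V_4V_3$ is right-angled at $V_4$.

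Next, I would invoke the standard hyperbolic trigonometric identities for right triangles with $k=-1$ (see \cite[I.2]{bh}). The hyperbolic Pythagorean theorem gives $\cosh p = \cosh c \cosh d = \cosh a \cosh b$, and the angle identities read
\[
\sin\gamma = \frac{\sinh d}{\sinh p},\quad \cos\gamma = \frac{\tanh c}{\tanh p},\quad \sin\delta = \frac{\sinh a}{\sinh p},\quad \cos\delta = \frac{\tanh b}{\tanh p}.
\]
Translating the complementarity $\alpha + \beta = \pi/2$ through the analogous formulas for $\alpha, \beta$ yields the side-length relations $\sinh b = \sinh d \cosh c$ and $\sinh c = \sinh a \cosh b$.

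Finally, I would expand $\sin\theta = \sin(\gamma + \delta) = \sin\gamma\cos\delta + \cos\gamma\sin\delta$, clear denominators using $\tanh = \sinh/\cosh$, and apply the side-length relations asymmetrically: substituting $\sinh b = \sinh d \cosh c$ in the first term and $\sinh a = \sinh c/\cosh b$ in the second gives a numerator of $\sinh^2 d\,\cosh^2 c + \sinh^2 c$. This simplifies to $\cosh^2 p - 1 = \sinh^2 p$ via $\sinh^2 c = \cosh^2 c - 1$ and $\cosh p = \cosh c \cosh d$. Cancelling this with the corresponding $\sinh^2 p$ factor in the denominator and using $\cosh p = \cosh a \cosh b$ then yields $\sin\theta = \cosh p / (\cosh b \cosh c) = \cosh a / \cosh c$.

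The main obstacle is algebraic bookkeeping: four side lengths, one diagonal, and four sub-angles all appear simultaneously, and the key simplification requires applying the two side-length relations in different forms to each of the two terms of the angle-addition expansion. A more conceptual alternative would be to realise the Lambert quadrilateral as a degenerate right-angled pentagon (one side shrunk to length zero) and invoke the known product formula for such pentagons; this avoids the diagonal computation but requires more background.
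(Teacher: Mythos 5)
Your proof is correct, but it runs along a genuinely different route from the paper's. The paper cuts along \emph{both} diagonals and never splits the summit angle: it applies the hyperbolic Pythagorean theorem in the two right triangles cut off by the diagonal $x$, then the general cosine and sine rules in the triangles cut off by the other diagonal $y$ (with $\theta$ entering through the sine rule), and finishes by equating two expressions for $\cosh c$. You instead cut along the single diagonal $V_1V_3$ through the summit vertex, so that $\theta=\gamma+\delta$, and work entirely with right-triangle identities ($\cosh p=\cosh c\cosh d=\cosh a\cosh b$, $\sin=\sinh/\sinh$, $\cos=\tanh/\tanh$), deriving the standard Lambert relations $\sinh b=\sinh d\cosh c$ and $\sinh c=\sinh a\cosh b$ from $\alpha+\beta=\pi/2$ and then expanding $\sin(\gamma+\delta)$; I checked the algebra and the numerator does collapse to $\sinh^2 p$ as you claim. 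What each buys: your argument needs only the right-triangle formulas (standard, e.g.\ Beardon Ch.~7, which is the natural citation here rather than \cite[I.2]{bh}) plus the angle-addition bookkeeping, and it implicitly uses convexity of the quadrilateral so that the diagonal really splits the angles at $V_1$ and $V_3$ additively --- worth a sentence; the paper's argument avoids splitting $\theta$ at the cost of invoking the general hyperbolic sine and cosine rules and a longer chain of substitutions. One caveat on your closing aside: a Lambert quadrilateral is not literally a right-angled pentagon with one side shrunk to zero (degenerating a side of a right-angled pentagon forces the resulting merged angle to collapse to $0$, not to an arbitrary acute $\theta$); the genuine relationship is the classical analytic-continuation/duality between trirectangle and pentagon formulas, so that alternative would need to be stated more carefully --- but it is inessential to your proof.
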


\begin{proof}
	We will give a short proof using the hyperbolic sine and cosine rules. Many such identities can also be found in \cite[Chapter 7]{beardonbook}, proved using the definition of the hyperbolic metric on the upper half plane.
	
	Divide the quadrilateral along diagonals, and label as follows:
	\begin{center}
\begingroup%
  \makeatletter%
  \providecommand\color[2][]{%
    \errmessage{(Inkscape) Color is used for the text in Inkscape, but the package 'color.sty' is not loaded}%
    \renewcommand\color[2][]{}%
  }%
  \providecommand\transparent[1]{%
    \errmessage{(Inkscape) Transparency is used (non-zero) for the text in Inkscape, but the package 'transparent.sty' is not loaded}%
    \renewcommand\transparent[1]{}%
  }%
  \providecommand\rotatebox[2]{#2}%
  \ifx\svgwidth\undefined%
    \setlength{\unitlength}{156.46159668bp}%
    \ifx\svgscale\undefined%
      \relax%
    \else%
      \setlength{\unitlength}{\unitlength * \real{\svgscale}}%
    \fi%
  \else%
    \setlength{\unitlength}{\svgwidth}%
  \fi%
  \global\let\svgwidth\undefined%
  \global\let\svgscale\undefined%
  \makeatother%
  \begin{picture}(1,1.19444356)%
    \put(0,0){\includegraphics[width=\unitlength]{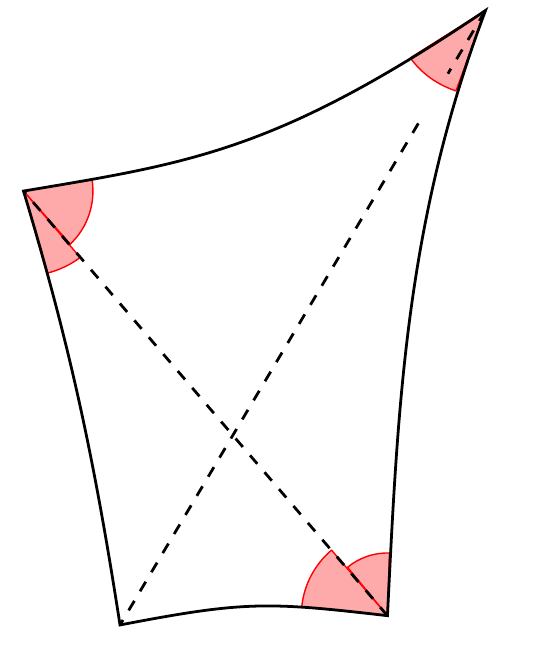}}%
    \put(0.17940938,0.7828447){\color[rgb]{0,0,0}\makebox(0,0)[lb]{\smash{$\beta$}}}%
    \put(0.1080543,0.6527114){\color[rgb]{0,0,0}\makebox(0,0)[lb]{\smash{$\beta'$}}}%
    \put(0.64725925,0.19795115){\color[rgb]{0,0,0}\makebox(0,0)[lb]{\smash{$\gamma$}}}%
    \put(0.52918321,0.13276302){\color[rgb]{0,0,0}\makebox(0,0)[lb]{\smash{$\gamma'$}}}%
    \put(0.75203611,0.52576628){\color[rgb]{0,0,0}\makebox(0,0)[lb]{\smash{$b$}}}%
    \put(0.41877944,0.02392963){\color[rgb]{0,0,0}\makebox(0,0)[lb]{\smash{$a$}}}%
    \put(0.08220025,0.42500648){\color[rgb]{0,0,0}\makebox(0,0)[lb]{\smash{$d$}}}%
    \put(0.37393283,0.94839723){\color[rgb]{0,0,0}\makebox(0,0)[lb]{\smash{$c$}}}%
    \put(0.33414499,0.55520875){\color[rgb]{0,0,0}\makebox(0,0)[lb]{\smash{$y$}}}%
    \put(0.52989787,0.67382156){\color[rgb]{0,0,0}\makebox(0,0)[lb]{\smash{$x$}}}%
    \put(0.76042444,0.98689228){\color[rgb]{0,0,0}\makebox(0,0)[lb]{\smash{$\theta$}}}%
    \put(0.15754353,0.00476854){\color[rgb]{0,0,0}\makebox(0,0)[lb]{\smash{$P$}}}%
    \put(0.90667919,1.16046457){\color[rgb]{0,0,0}\makebox(0,0)[lb]{\smash{$R$}}}%
    \put(-0.00569229,0.85301171){\color[rgb]{0,0,0}\makebox(0,0)[lb]{\smash{$S$}}}%
    \put(0.71720905,0.02225814){\color[rgb]{0,0,0}\makebox(0,0)[lb]{\smash{$Q$}}}%
  \end{picture}%
\endgroup%

	\end{center}
	Note that $\beta+\beta'=\pi/2=\gamma+\gamma'$, and hence $\cos \gamma = \sin \gamma'$, and $\cos \beta' = \sin \beta$. Firstly, applying the (hyperbolic) Pythagorean theorem in triangles $PQR$ and $PSR$ gives \begin{align} \cosh a \cosh b = \cosh x = \cosh c \cosh d. \label{eqpyth} \end{align}
	The cosine rule in triangle $QRS$ gives \begin{align} \cosh c &= \cosh b \cosh y - \sinh b \sinh y \cos \gamma \nonumber \\
	&= \cosh b \cosh y - \sinh b \sinh y \sin \gamma'. \label{eqfirst} \end{align}
	The sine rule in the right-angled triangle $PQS$ gives \begin{align}
	\sinh y \sin \gamma' = \sinh d, \nonumber \end{align}
	and substituting this into \eqref{eqfirst} gives \begin{align} \cosh c
	= \cosh b \cosh y - \sinh b \sinh d. \label{eqcfirst} \end{align}
	Similarly applying the cosine rule to triangle $PQS$ and substituting for $\cos \beta' = \sin \beta$ using the sine rule in triangle $QRS$, we obtain: \begin{align} \cosh a
	= \cosh d \cosh y - \sinh b \sinh d \sin \theta. \label{eqsecond} \end{align}
	Now, from \eqref{eqpyth}, we have \begin{align} \cosh d
	= \frac{\cosh b \cosh a}{\cosh c}, \nonumber \end{align} and substituting this into \eqref{eqsecond} gives \begin{align}
	& &\cosh a &= \frac{\cosh b \cosh a \cosh y}{\cosh c} - \sinh b \sinh d \sin \theta. \nonumber \\
	&\implies \quad &\cosh c &= \cosh b \cosh y -\sinh b \sinh d \sin \theta \left( \frac{\cosh c}{\cosh a} \right). \label{eqcsecond} \end{align}
	Finally, equating \eqref{eqcfirst} and \eqref{eqcsecond} we obtain: \begin{align*} & &\cosh b \cosh y - \sinh b \sinh d &= \cosh b \cosh y -\sinh b \sinh d \sin \theta \left( \frac{\cosh c}{\cosh a} \right) \\ &\implies \quad & 1 &= \sin \theta \left( \frac{\cosh c}{\cosh a} \right) \\ &\implies \quad &\sin \theta &= \frac{\cosh a}{\cosh c}, \end{align*} as required.
\end{proof}

\begin{lemma}\label{lemma:lambertexist}
	A Lambert quadrilateral in $\mathbb{H}=M^2_{-1}$ with summit length $c$ and summit angle $\theta$ exists if and only if $\theta_c < \theta < \pi/2$, where\[
	\theta_c = \sin^{-1} \left( \frac{1}{\cosh c} \right). \]
\end{lemma}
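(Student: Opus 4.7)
The plan is to use Lemma \ref{lemma:lambertcosh} as the key trigonometric input, combined with a continuity argument to construct the quadrilateral.

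For the necessity direction, suppose a Lambert quadrilateral with summit length $c$ and summit angle $\theta$ exists, and let $a > 0$ denote its base length. Lemma \ref{lemma:lambertcosh} gives $\sin \theta = \cosh a / \cosh c$. The angle sum of any hyperbolic quadrilateral is strictly less than $2\pi$ (a consequence of Gauss--Bonnet, or of triangulating and using the hyperbolic triangle angle-sum bound), so three right angles plus $\theta$ sum to less than $2\pi$, forcing $\theta < \pi/2$. Since $a > 0$ gives $\cosh a > 1$, we have $\sin \theta > 1/\cosh c = \sin \theta_c$, and the strict monotonicity of $\sin$ on $(0, \pi/2)$ yields $\theta > \theta_c$.

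For the sufficiency direction, given $\theta \in (\theta_c, \pi/2)$, set $a := \cosh^{-1}(\sin \theta \cdot \cosh c)$, which by hypothesis lies in $(0, c)$. I would construct a Lambert quadrilateral with base $a$ and summit $c$ via the following continuity argument. Take a geodesic $\gamma \subset \mathbb{H}^2$ containing a segment $PQ$ of length $a$; erect perpendiculars $L_P$ and $L_Q$ to $\gamma$ at $P$ and $Q$ on the same side; for each $d > 0$ mark $S$ on $L_P$ at distance $d$ from $P$, and let $R$ be the foot of the perpendicular from $S$ to $L_Q$. Then $PQRS$ has right angles at $P$, $Q$, and $R$, so is a Lambert quadrilateral (with summit angle at $S$), and its summit length $c(d) := |SR|$ depends continuously on $d$. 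As $d \to 0^+$, $S \to P$ and $R \to Q$, so $c(d) \to |PQ| = a$; as $d \to \infty$, the perpendicular distance between the ultraparallel geodesics $L_P$ and $L_Q$ grows without bound, so $c(d) \to \infty$. By the intermediate value theorem there exists $d^* > 0$ with $c(d^*) = c$. Applying Lemma \ref{lemma:lambertcosh} to this quadrilateral gives $\sin \theta' = \cosh a / \cosh c = \sin \theta$ for its summit angle $\theta'$, and since both angles lie in $(0, \pi/2)$, we conclude $\theta' = \theta$.

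The main obstacle is justifying the continuity and limiting behavior of $c(d)$. Continuity and the limit as $d \to 0^+$ follow from continuous dependence of the perpendicular foot on the base point; the limit as $d \to \infty$ can be verified by explicit computation using the identity \eqref{eqpyth} applied to the diagonal triangles of $PQRS$, which yields the asymptotic $\cosh c(d) \sim \sinh a \cdot \cosh d$.
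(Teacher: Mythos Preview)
Your argument is correct in outline but follows a different construction from the paper's. The paper fixes the summit segment $C$ of length $c$, erects a perpendicular $B$ at one endpoint, and lets a geodesic $B'$ through the other endpoint make a variable angle $\theta$ with $C$; the base is then the common perpendicular to $B$ and $B'$, which exists precisely while $B$ and $B'$ remain disjoint, and $\theta_c$ is read off by setting $a=0$ in Lemma~\ref{lemma:lambertcosh}. You instead solve for the required base length $a$ first, build a Lambert quadrilateral with that base by varying the height $d$, and recover the summit angle from Lemma~\ref{lemma:lambertcosh} at the end. Both routes work; the paper's parametrises directly by $\theta$, while yours has the advantage of giving an explicit necessity argument that the paper's proof leaves somewhat implicit.

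Two points in your sufficiency argument need tightening. First, the phrase ``the perpendicular distance between the ultraparallel geodesics $L_P$ and $L_Q$ grows without bound'' is false as stated: that distance is exactly $a$, realised along the common perpendicular $PQ$. What you need is that the distance from the \emph{point} $S$ to the geodesic $L_Q$ tends to infinity as $d\to\infty$, i.e.\ that ultraparallel geodesics diverge. Second, identity~\eqref{eqpyth} alone does not yield the asymptotic $\cosh c(d)\sim\sinh a\cdot\cosh d$: in your labelling it reads $\cosh a\,\cosh d=\cosh b(d)\,\cosh c(d)$, which still involves the unknown side $b(d)$. The clean relation here is the Lambert identity $\sinh c(d)=\sinh a\,\cosh d$ (see \cite[Theorem~7.17.1]{beardonbook}), which gives both limits immediately---and in fact shows $d\mapsto c(d)$ is a bijection from $(0,\infty)$ onto $(a,\infty)$, making the intermediate value step unnecessary.
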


\begin{proof}
	Recall that for any two non-crossing geodesics in $\mathbb{H}$, there is a unique geodesic perpendicular to both. Now, construct a geodesic segment $C$ of length $c$ in $\mathbb{H}$, with a perpendicular geodesic $B$ at one end, and a perpendicular geodesic $B'$ at the other end. Clearly $B'$ and $B$ do not cross, but no (non-degenerate) Lambert quadrilateral exists with summit $C$, since the unique geodesic perpendicular to $B$ and $B'$ is $C$. Now continuously decrease the angle $\theta$ between $C$ and $B'$. By convexity of the metric, the unique geodesic perpendicular to $B$ and $B'$ will form a Lambert quadrilateral on the same side of $C$ as the angle $\theta$, until $\theta$ reaches the value $\theta_c$ at which $B$ and $B'$ cross. This can be calculated (for example) by setting $a=0$ in the formula from Lemma \ref{lemma:lambertcosh}: \[
	\theta_c = \sin^{-1} \left( \frac{1}{\cosh c} \right), \]
	as required.
\end{proof}

Note that, given such $c$ and $\theta$, the quadrilateral is then uniquely determined.

\begin{lemma}\label{lemma:lambertratio}
	For any $0<\theta<\pi$, and for any $a$, $c$ such that $c>a>0$, there exists $k<0$ and a Lambert quadrilateral in $M^2_k$ with base length $a$, summit length $c$ and summit angle greater than $\theta/2$.
\end{lemma}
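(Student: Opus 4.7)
The plan is to apply the formula from Lemma \ref{lemma:lambertcosh} after rescaling, and then take the curvature $k$ close to zero so that the Lambert quadrilateral becomes nearly Euclidean and its summit angle approaches $\pi/2$.

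More precisely, I would first recall that $M^2_k$ is obtained from $\mathbb{H} = M^2_{-1}$ by multiplying the metric by $1/\sqrt{-k}$, so that a Lambert quadrilateral in $M^2_k$ with base $a$ and summit $c$ corresponds to one in $\mathbb{H}$ with base $a\sqrt{-k}$ and summit $c\sqrt{-k}$, and with the same summit angle (angles being preserved by rescaling). Before computing, I would verify that such a quadrilateral actually exists: by Lemma \ref{lemma:lambertexist} this amounts to checking that $\sin^{-1}(1/\cosh(c\sqrt{-k})) < \theta'_k < \pi/2$. Since $0 < a < c$ and $\cosh$ is strictly increasing on $[0,\infty)$, we have $1 < \cosh(a\sqrt{-k}) < \cosh(c\sqrt{-k})$, so Lemma \ref{lemma:lambertcosh} forces the summit angle $\theta'_k$ to satisfy
\[
\sin \theta'_k = \frac{\cosh(a\sqrt{-k})}{\cosh(c\sqrt{-k})} \in \left(\frac{1}{\cosh(c\sqrt{-k})},\, 1\right),
\]
which simultaneously shows that the existence condition is met and gives a clean expression for the summit angle.

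The final step is to take $k \to 0^-$. Both $a\sqrt{-k}$ and $c\sqrt{-k}$ tend to $0$, so $\cosh(a\sqrt{-k}), \cosh(c\sqrt{-k}) \to 1$, and hence $\sin \theta'_k \to 1$, giving $\theta'_k \to \pi/2$. Since $\theta < \pi$, we have $\theta/2 < \pi/2$; both $\theta/2$ and $\theta'_k$ lie in $(0,\pi/2)$, where $\sin$ is strictly increasing, so choosing $k$ with $|k|$ sufficiently small forces $\theta'_k > \theta/2$, completing the proof.

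There is essentially no obstacle here beyond bookkeeping the rescaling correctly; the content of the lemma is simply that by choosing the curvature sufficiently close to zero one can make the Lambert quadrilateral arbitrarily close to a Euclidean rectangle, so its fourth angle becomes arbitrarily close to a right angle.
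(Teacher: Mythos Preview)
Your proof is correct and follows essentially the same approach as the paper: both arguments rescale to $\mathbb{H}$ and use the formula $\sin\theta'=\cosh(\text{base})/\cosh(\text{summit})$ from Lemma~\ref{lemma:lambertcosh}. The only difference is that the paper selects an explicit value of $k$ (by first fixing $\bar c=\cosh^{-1}(1/\sin(\theta/2))$ and then rescaling), whereas you argue by taking $k\to 0^-$ and invoking continuity; this is a cosmetic distinction.

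One small expository point: your existence check reads a little circularly, since you invoke Lemma~\ref{lemma:lambertcosh} to compute $\theta'_k$ before the quadrilateral is known to exist. Cleaner is to first observe that $\cosh(a\sqrt{-k})/\cosh(c\sqrt{-k})\in(1/\cosh(c\sqrt{-k}),1)$, define $\theta'_k\in(\theta_{c\sqrt{-k}},\pi/2)$ by this sine value, and then apply Lemma~\ref{lemma:lambertexist} to produce the quadrilateral (whose base is then forced to be $a\sqrt{-k}$ by Lemma~\ref{lemma:lambertcosh}).
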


\begin{proof}
	First, let \[
		\bar{c}=\cosh^{-1} \left( \frac{1}{\sin \left( \frac{\theta}{2} \right)} \right), \]
	so that $\frac{\theta}{2}=\theta_{\bar{c}}$. For any $\bar{a}<\bar{c}$, we have \[
	1>	\frac{\cosh \bar{a}}{\cosh \bar{c}}>\frac{1}{\cosh \bar{c}}=\sin \left( \frac{\theta}{2} \right)
		\]
	and hence:\[
	\frac{\pi}{2}>	\sin^{-1} \left( \frac{\cosh \bar{a}}{\cosh \bar{c}} \right) > \frac{\theta}{2} 
	\]	
	Applying Lemma \ref{lemma:lambertexist}, we see that a Lambert quadrilateral exists in $M^2_{-1}$ with base length $\bar{a}$, summit length $\bar{c}$ and summit angle $\sin^{-1} \left( \frac{\cosh \bar{a}}{\cosh \bar{c}} \right) > \theta/2$. By multiplying the metric by a factor $c/\bar{c}$, we obtain a Lambert quadrilateral in $M^2_k$, where $k=-\left( \bar{c}/c \right)^2$, satisfying the required conditions.
\end{proof}

\begin{lemma}\label{lemma:annulus}
	For any $0<\theta<\pi$, and any $A$, $C$ such that $C>A>0$, there exists $k<0$ and a locally \cat{k} annulus with one locally geodesic boundary component of length $A$, and one boundary component of length $C$ which is locally geodesic everywhere except for one point where it subtends an angle greater than $\theta$.
\end{lemma}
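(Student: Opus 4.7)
The plan is to build the desired annulus as a Saccheri quadrilateral in $M^2_k$ with two opposite sides identified.

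First I apply Lemma \ref{lemma:lambertratio} with the given $\theta$ and with base length $A/2$, summit length $C/2$ (which satisfy $C/2 > A/2 > 0$). This yields some $k<0$ and a Lambert quadrilateral $Q_0 \subset M^2_k$ with base $A/2$, summit $C/2$ and summit angle $\phi$ satisfying $\theta/2 < \phi < \pi/2$.

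Next I glue two copies of $Q_0$, one being the mirror image of the other, along the vertical side whose two endpoints are right angles (the side opposite the summit-angle corner). At each endpoint of this gluing edge, the angles of the two copies sum to $\pi/2+\pi/2=\pi$, so the base geodesics of the two copies align into a single geodesic, and similarly for the summits. The result is a single $M^2_k$-quadrilateral $Q$: a Saccheri quadrilateral with base of length $A$, summit of length $C$, base angles $\pi/2$, summit angles $\phi$, and two vertical sides of equal length by symmetry.

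I then form the annulus $Y$ by isometrically identifying the two vertical sides of $Q$. Let $v_A$ and $v_C$ denote the identified vertices on the base and summit respectively. The base becomes a closed curve of length $A$; at $v_A$ the angle subtended from within $Y$ is $\pi/2+\pi/2=\pi$, so this boundary is a closed local geodesic. The summit becomes a closed curve of length $C$; at $v_C$ the angle subtended from within $Y$ is $\phi+\phi=2\phi$, which satisfies $\theta < 2\phi < \pi$, so this boundary is locally geodesic everywhere except at $v_C$, where it subtends the required angle $>\theta$.

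Finally I check that $Y$ is locally \cat{k} via the link condition. Away from $v_A$ and $v_C$, the annulus is locally isometric to $M^2_k$ (at interior points) or to a half-plane of $M^2_k$ (at boundary points), and so is locally \cat{k}. At each of $v_A$ and $v_C$ the link is a single arc (of length $\pi$ and $2\phi$ respectively), which contains no essential loops and is therefore trivially \cat{1}. The link condition is satisfied and $Y$ is locally \cat{k} as claimed. The whole argument is essentially bookkeeping once Lemma \ref{lemma:lambertratio} supplies the Lambert quadrilateral; the only mild subtlety is ensuring that the two gluings preserve the $M_k$-polyhedral structure, which follows from the fact that both are by isometries along matching geodesic arcs.
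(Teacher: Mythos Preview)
Your proof is correct and takes essentially the same approach as the paper: apply Lemma~\ref{lemma:lambertratio} with $a=A/2$, $c=C/2$, then double the resulting Lambert quadrilateral along one pair of opposite sides and identify the other pair to obtain the annulus. You have simply made explicit the angle computations and the link-condition verification that the paper leaves to the reader and to Figure~\ref{fig:annulus}.
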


\begin{proof}
	Apply Lemma \ref{lemma:lambertratio} with the same $\theta$, $a=A/2$ and $c=C/2$. Now take two copies of the resulting Lambert quadrilateral, and glue together two pairs of sides to obtain the required annulus (see Figure \ref{fig:annulus}).
\end{proof}


\begin{figure}[htbp]
	\centering
\begin{tikzpicture}
	\coordinate (V1) at (-1.5,0);
	\coordinate (V2) at (-2.5,3);
	\coordinate (V3) at (2.5,3);
	\coordinate (V4) at (1.5,0);
	\draw[thin]
	(V1) to[bend right=5] coordinate[pos=0.1] (d1) coordinate[pos=0.9] (d2) (V2) to[bend right=5] coordinate[pos=0.1] (c1) coordinate[pos=0.5] (X) coordinate[pos=0.9] (c2) (V3) to[bend right =5] coordinate[pos=0.1] (b1) coordinate[pos=0.9] (b2) (V4) to[bend right=10] coordinate[pos=0.1] (a1) coordinate[pos=0.5] (Y) coordinate[pos=0.9] (a2) (V1);
	\tkzMarkRightAngle(a2,V1,d1);
	\tkzMarkRightAngle(d2,V2,c1);
	\tkzMarkRightAngle(b2,V4,a1);
	\tkzMarkAngle[draw=red, fill=red!25, size=7mm,label={}](c2,V3,b1);
	\tkzMarkAngle[draw=red, fill=red!25, size=7mm,label={}](d2,V2,c1);
	\draw[thick]
	(V1) to[bend right=5] (V2) to[bend right=5] (V3) to[bend right =5] (V4) to[bend right=10] (V1);
	\draw[thick, dashed] (X) to (Y);
	\draw[thick, ->] (3,1.5) to (4,1.5);
	\draw[thick] (5,0) arc (180:360:0.8cm and 0.3cm) coordinate[pos=0.6] (M) ;
	\draw[thick, dotted] (5,0) arc (180:0:0.8cm and 0.3cm);
	\draw[thick] (5,0) to[bend right=15] (4.5,3);
	\draw[thick] (6.6,0) to[bend left=20] (7.1,3);
	\draw[thick] (4.5,3) arc (180:360:1.3cm and 0.4cm) coordinate[pos=0.6] (N) ;
	\draw[thick] (4.5,3) ..controls (4.6,3.3) and (5.2,3.3) .. (5.5,3.6);
	\draw[thick] (7.1,3) ..controls (6.8,3.4) and (5.8,3.3)  .. (5.5,3.6) ;
	\draw[thick, dashed] (M) to[bend left=5] (N);
	\draw[thick,red, dashed] (5.5,3.6) to[bend left=5] (5.7,0.3);
	\draw[thick,red] (V1) to[bend right=5] (V2);
	\draw[thick,red] (V3) to[bend right=5] (V4);
\end{tikzpicture}
	\caption{Gluing two Lambert quadrilaterals to obtain an annulus. Identify the two sides of the left picture. The bottom edge of the annulus has length $A$, the top edge has length $C$, and the angle in the top edge is $>\theta$.}	
	\label{fig:annulus}
\end{figure}
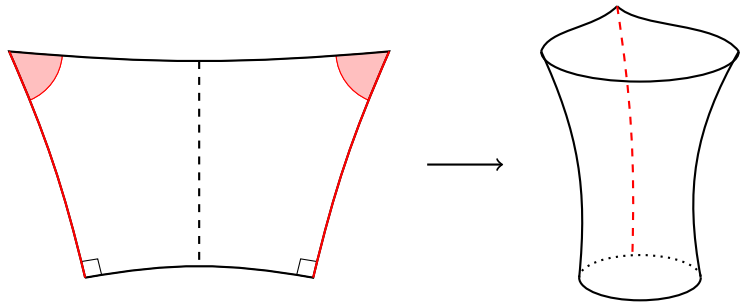

\subsection{Transversality}

As discussed in the introduction, we would like to build new negatively curved complexes by gluing fins and annuli to existing ones. To avoid introducing positive curvature, for example by identifying a pair of adjacent sides in two fins, we would like to glue along paths that intersect transversely. The following lemma ensures that this can always be arranged:

\begin{definition}
	Let $\gamma = \left( e_1 \gdots e_n \right)$ and $\gamma' = \left( e'_1 \gdots e'_{n'} \right)$ be two closed geodesics in $K$. We say $\gamma$ and $\gamma'$ \emph{intersect transversely} if $e_i \neq e'_j$ and $\bar{e}_i \neq e_j$ for all $i$ and $j$ (that is, the two loops do not share an edge). Similarly, we say $\gamma$ \emph{intersects itself transversely} if $e_i \neq e_j$ and $\bar{e}_i \neq e_j$ for all $i \neq j$.
\end{definition}

\begin{lemma}\label{lemma:transverse}
	Let $K$ be a negatively curved simplicial 2-complex of curvature $k$. Let $\left\{ \gamma^{(1)} \gdots \gamma^{(m)} \right\}$ be a collection of simplicial closed geodesics such that the corresponding cyclic subgroups of $\pi_1\left(K\right)$ form a malnormal family. Then there is a 2-complex $\bar{K}$ such that: \begin{itemize}
		\item $\bar{K}$ is a negatively curved simplicial complex of curvature $\bar{k}$, where $k\leq\bar{k}<0$;
		\item there is an inclusion $i \colon K \rightarrow \bar{K}$ and a deformation retraction $r \colon \bar{K} \rightarrow K$;
		\item there are closed geodesics $ \bar{\gamma}^{(1)} \gdots \bar{\gamma}^{(m)}$ in $\bar{K}$, such that $r \left( \bar{\gamma}^{(i)} \right)=\gamma^{(i)}$, which intersect themselves and each other transversely.
	\end{itemize}
\end{lemma}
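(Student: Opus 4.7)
The strategy is to resolve the edge-sharings among the geodesics by attaching small hyperbolic gadgets (fins, or strips of simplices) along each shared arc, through which one of the offending geodesics is rerouted. Two preparatory moves make this work. First, pass to the comparison complex $K'$ of curvature $\bar k$ with $k<\bar k<0$, yielding an excess angle $\delta=\delta(K',K)>0$; by Remark \ref{rmk:angleexcess}, each image geodesic $\gamma^{(i)'}$ subtends at least $\pi+2\delta$ at every vertex. This angular slack will absorb the perturbations introduced by the gadgets. Second, note that malnormality of the family $\{\langle\gamma^{(i)}\rangle\}$ implies that in the universal cover $\widetilde K$ any two distinct axis-lifts of the given geodesics share only a uniformly bounded simplicial arc (an infinite shared portion would force the bi-infinite stabiliser to lie in two distinct conjugates of cyclic edge-group subgroups). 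Combined with compactness of $K$, only finitely many $\pi_1(K)$-orbits of shared arcs must be resolved, which we do equivariantly.

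For each maximal shared arc $A=e_1e_2\cdots e_n$ between (lifts of) two geodesics (or of one geodesic with itself), attach along $A$ a thin hyperbolic strip—triangulated into finitely many $M^2_{\bar k}$-simplices built from Lambert quadrilaterals as in Lemma \ref{lemma:lambertratio}—chosen so that its free boundary provides an alternative simplicial edge-path $A'$ from $\iota(e_1)$ to $\tau(e_n)$ running parallel to $A$, with subtended angle at least $\pi$ at every interior vertex. The strip is attached so that its attaching locus contains the entire arc $A$, which will ensure it deformation-retracts back onto $A$. The offending geodesic is then rerouted through $A'$ instead of $A$. Performing this modification equivariantly at each orbit produces the candidate complex $\bar K$ together with the rerouted paths $\bar\gamma^{(i)}$.

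To verify the three conclusions: (i) $\bar K$ is locally \cat{\bar k} by the Link Condition. At new (interior) vertices of the strips, the link is a short metric tree, which is trivially \cat{1}. At an old vertex $u\in K$, the link in $\bar K$ is the link in $K'$ with some additional short arcs attached, one from each gadget incident to $u$; provided each gadget's apex angle is chosen less than $\delta/N$, where $N$ bounds the number of gadgets incident to any single vertex (finite by malnormality and local finiteness), the total additional length is less than $\delta$, and by Remark \ref{rmk:cat1} the $2\delta$-slack from Remark \ref{rmk:angleexcess} prevents any essential loop of length $<2\pi$ from appearing. (ii) The inclusion $i\colon K\to\bar K$ is the combinatorial identification of $K$ with $K'$ inside $\bar K$, and the deformation retraction $r$ is obtained by retracting each gadget onto the arc it was glued along. (iii) The rerouted path $\bar\gamma^{(i)}$ is locally geodesic: at each old vertex its subtended angle is at least $\pi+2\delta-\delta/N>\pi$; at each new vertex along $A'$ it is locally geodesic by construction of the strip. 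Transversality holds because each originally shared arc is now used by at most one of the rerouted geodesics, and $r(\bar\gamma^{(i)})=\gamma^{(i)}$ by the retraction collapsing each gadget back to its shared arc.

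The main obstacle is step (i): controlling the links simultaneously after all the modifications, particularly at vertices where several gadgets accumulate. The uniform local-congestion bound $N$, guaranteed by malnormality together with local finiteness of $K$, allows a uniform choice of gadget angles in advance—small relative to $\delta/N$—so that the total angular contribution at every vertex stays strictly within the $2\delta$ slack provided by the passage to the comparison complex. Once this uniform smallness is achieved, the remaining verifications are routine.
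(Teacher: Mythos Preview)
Your overall plan—pass to a comparison complex to create angular slack, attach hyperbolic gadgets along shared portions, and reroute one of the offending geodesics through each gadget—is the same idea as the paper's. The paper, however, proceeds \emph{iteratively}: it introduces a counter $I$ (the number of edge repetitions among the $\gamma^{(i)}$) and reduces $I$ by one at each step by gluing a \emph{single} $(a,\pi-\delta,b)$-fin at the vertex where two overlapping geodesics first diverge, passing to a fresh comparison complex before every step so as to restore the excess angle. Malnormality is used only to guarantee that such a divergence vertex exists. This one-fin-at-a-time scheme sidesteps entirely the congestion bookkeeping you attempt.

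Your all-at-once version has a genuine gap in the link verification. You assert that at an old vertex $u$ the link in $\bar K$ is the link in $K'$ ``with some additional short arcs attached'', and that choosing gadget angles below $\delta/N$ keeps the link \cat{1}. This has the inequality backwards. By Remark~\ref{rmk:cat1}, attaching an arc of length $\ell$ between two link-points at distance $d$ preserves \cat{1} only when $d+\ell\geq 2\pi$. Now at any \emph{interior} vertex $v_i$ of a shared arc $A=e_1\cdots e_n$, your strip is glued along both $e_i$ and $e_{i+1}$, so the new link-arc joins the link-vertices for $e_i$ and $e_{i+1}$; these sit at distance $d\geq\pi+2\delta$, and therefore the new arc must have length $\ell\geq\pi-2\delta$, i.e.\ \emph{close to $\pi$}, not close to $0$. (This is exactly why the paper's fin has apex angle $\pi-\delta$.) With your choice $\ell<\delta/N$ one obtains an essential link-loop of length at most $\pi+2\delta+\delta/N<2\pi$, violating the link condition. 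The same confusion surfaces in your geodesicity check for $\bar\gamma^{(i)}$: once the strip angles along $A$ are forced to be near $\pi$, the subtraction ``$\pi+2\delta-\delta/N$'' no longer describes what happens at those vertices, and a separate argument is needed at the endpoints of $A$ versus its interior.

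Two smaller points. You invoke compactness of $K$, which is not among the hypotheses; it is unnecessary anyway, since the $\gamma^{(i)}$ are closed simplicial loops and hence involve only finitely many edges, so there are only finitely many overlaps to resolve. And the detour through axis-lifts in $\widetilde K$ is avoidable: the paper extracts directly from malnormality that two geodesics sharing an edge cannot coincide forever (else one is a power of, or conjugate to, the other), which is all that is needed to locate the single divergence vertex for the next fin.
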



\begin{proof}
The complex $\bar{K}$ will be obtained from $K$ by gluing on fins to shorten the intersection between the closed geodesics $\gamma^{(i)}$. We will ensure that there is ``room'' to glue these fins by repeatedly taking the comparison complex and obtaining an excess angle at the vertices. 
	
Let $\gamma^{(i)} = \left( e^{(i)}_1 \gdots e^{(i)}_{n(i)} \right)$. Let $I$ be the number of repetitions (ignoring orientation) in the list
	\[\left\{e^{(i)}_j  \; \middle| \; i=1 \gdots m, \; j=1 \gdots n(i)	\right\};\]
that is, $I$ counts $d-1$ for each edge of $K$ that occurs $d$ times in the union of the $\gamma^{(i)}$. Thus, $I$ counts the number of failures of transversality of the set of geodesics; if $I=0$, then the $\gamma^{(i)}$ intersect themselves and each other transversely, and the conclusions of the lemma hold already. To prove the lemma, we will show that if $I > 0$, it is always possible to find a homotopy equivalent negatively curved simplicial 2-complex with $I$ reduced by $1$. 
	
So, suppose $I > 0$. The first stage is to replace $K$ by a comparison complex $K'$ of curvature (say) $k/2$. Trivially, the inverse of the isomorphism $' \colon K \rightarrow K'$ is a homotopy equivalence and preserves the $\gamma^{(i)}$, so we may safely proceed with $K'$ instead of $K$. Let $\delta$ be the excess angle $\delta(K,K')$; if Shapes($K$) is not finite, then we may ensure $\delta>0$ by taking the minimum in Definition \ref{def:delta} only over the finitely many angles at vertices contained in the $\gamma^{(i)}$ (see Remark \ref{rmk:deltafinite}).
	
	
	Since $I > 0$, we may assume that there are two closed geodesics $\gamma = \left\{ e_1 \gdots e_r \right\}$, $\gamma' = \left\{ e'_1 \gdots e'_s \right\}$, obtained by relabelling and possibly reversing the $\gamma^{(i)}$, such that $e_1=e'_1$ with orientation (note that $\gamma$ and $\gamma'$ may be distinct relabellings or reversals of the same $\gamma^{(i)}$). Without loss of generality, $r \leq s$. 
	
	
	Now, suppose that, for all $z \in \mathbb{Z}$, $e_{z\,\,\,\text{mod}\,\,r}=e'_{z\,\,\,\text{mod}\,\,s}$. In the case that $\gamma$ and $\gamma'$ are relabellings of different $\gamma^{(i)}$, this implies that the loops are both powers of a common loop, contradicting the malnormal family assumption. If $\gamma$ and $\gamma'$ are relabellings without reversal of the same $\gamma^{(i)}$, then this again implies that $\gamma^{(i)}$ is a proper power of a loop, which contradicts the assumption. The only remaining case is that $\gamma$ and $\gamma'$ are relabellings of the same $\gamma^{(i)}$, but one of them is reversed; that is, $e'_1$ $(=e_1)$ $=\bar{e}_t$ for some $t>1$. It follows that $e_2=\bar{e}_{t-1}$, $e_3=\bar{e}_{t-2}$ and so on---hence, $\gamma$ must contain either an edge $e=\bar{e}$ (which is forbidden by definition) or an adjacent pair $e, \bar{e}$, which contradicts the fact that the loops are local geodesics. 
	
	It follows that there is some $j<s$ such that $e_{j}=e'_j$ but $e_{j+1} \neq e'_{j+1}$. Let $a= \left|e_j\right|$ and $b=\left|e'_{j+1}\right|$. Now, form a complex $K^+$ from $K'$ by gluing an $(a,\pi-\delta, b)$-fin along $e_j$ and $e'_{j+1}$ (see Figure \ref{fig:gluefin}). For any closed geodesic $\ell$ in $K'$ containing $\left( e'_j, e'_{j+1} \right)$, the loop $\ell^+$ in $K^+$ obtained from $\ell$ by replacing $\left( e'_j, e'_{j+1} \right)$ with the new edge $e'$ (along the top of the fin) is a local geodesic in $K^+$, and there is a clear deformation retraction to $K'$ sending $\ell^+$ to $\ell$. So, consider $K^+$ together with the set of closed geodesics obtained from $\left\{ \gamma^{(1)} \gdots \gamma^{(m)} \right\}$ by replacing any occurrence of $\left( e'_j, e'_{j+1} \right)$ with $e'$. This satisfies the first two properties in the lemma, and the value of $I$ is strictly lower than for the original set of loops and the complex $K$ (we have removed at least one edge in the intersection between $\gamma$ and $\gamma'$). It therefore remains only to check that $K^+$ is negatively curved.
	
	For this, we use the link condition. The only three links to check are those at $v_{j-1}$, $v'_{j+1}$ and $v_j$, since all the others are unchanged from $K'$. To obtain $\text{link} \left(v_{j-1}, K^+\right)$ and $\text{link} \left(v'_{j+1}, K^+\right)$, we have simply glued a leaf to the corresponding links in $K'$, and to obtain $\text{link} \left(v_{j}, K^+\right)$, we have (in light of Remark \ref{rmk:angleexcess}), glued an edge of length $\pi-\delta$ between two vertices of distance $ \geq \pi+2\delta$ in $\text{link} \left(v_{j}, K'\right)$. Neither of these can introduce a failure of the \cat{1} condition (by Remark \ref{rmk:cat1}) and so $K^+$ is negatively curved. This completes the proof.
	\end{proof}

\begin{figure}[htbp]
	\centering
\begin{tikzpicture}
	\begin{scope}[scale=1.3]
	\coordinate (V1) at (0,0);
	\coordinate (V2) at (1.5,0.4);
	\coordinate (V3) at (3,0.1);
	\coordinate (V4) at (4.5,1);
	\coordinate (V5) at (4.5,-0.4);
	\draw[thick, red]
		([yshift=-1pt]V1) -- node[black, below=2pt] {$e_{j-1}$} ([yshift=-1pt]V2) -- node[black, below] {$e_j$} ([yshift=-1pt]V3) -- node[black, below] {$e_{j+1}$} (V5);
	\draw[thick, blue]
		([yshift=1pt]V1) -- node[black, above] {$e'_{j-1}$} ([yshift=1pt]V2) -- node[black, above] {$e'_j$} ([yshift=1pt]V3) -- node[black, above left=-5pt] {$e'_{j+1}$} (V4);
	\fill[black] (V1) circle (2pt);
	\fill[black] (V2) circle (2pt) node[below=2pt] {$v_{j-1}$};
	\fill[black] (V3) circle (2pt) node[below=2pt] {$v_{j}$};
	\fill[black] (V4) circle (2pt) node[below=2pt] {$v'_{j+1}$};
	\fill[black] (V5) circle (2pt);
	\end{scope}	
		\draw[thick, ->]
		(6,0.3) -- (7,0.3);
	\begin{scope} [shift={(7.5,0)}, scale=1.3]
	\coordinate (V1) at (0,0);
	\coordinate (V2) at (1.5,0.4);
	\coordinate (V3) at (3,0.1);
	\coordinate (V4) at (4.5,1);
	\coordinate (V5) at (4.5,-0.4);
	\path [fill=blue!15]
	([yshift=-1pt]V2) to ([yshift=1pt]V2) to[bend right=8] (V4) to ([yshift=0pt]V3);
	\draw[thick, red]
		([yshift=-1pt]V1) -- node[black, below=2pt] {$e_{j-1}$} ([yshift=-1pt]V2) -- node[black, below] {$e_j$} ([yshift=-1pt]V3) -- node[black, below] {$e_{j+1}$} (V5);
	\draw[thick, blue, dashed]
		([yshift=0pt]V2) --	([yshift=0pt]V3) -- (V4);
	\draw[thick, blue]
		([yshift=1pt]V1) -- node[black, above] {$e'_{j-1}$} ([yshift=1pt]V2) to[bend right=8] node[black, above] {$e'$} (V4);
	\fill[black] (V1) circle (2pt);
	\fill[black] (V2) circle (2pt) node[below=2pt] {$v_{j-1}$};
	\fill[black] (V3) circle (2pt) node[below=2pt] {$v_{j}$};
	\fill[black] (V4) circle (2pt) node[below=2pt] {$v'_{j+1}$};
	\fill[black] (V5) circle (2pt);
	\end{scope}
\end{tikzpicture}
	\caption{Gluing on a fin}	
	\label{fig:gluefin}
\end{figure}
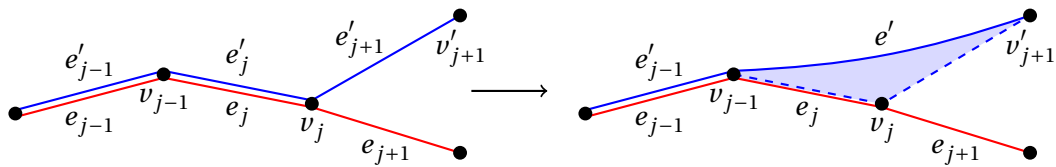

\begin{remark}
	It is not difficult to see that we may improve the above lemma to make the geodesics not only transverse, but disjoint. Indeed, if two geodesics intersect transversely at a vertex $v$, we can make them locally disjoint by gluing a fin along one of the two geodesics at $v$. Alternatively, it can be seen as an application of Theorem \ref{thm:main} in the next section, where the underlying graph is a star with vertex spaces $K$ for the central vertex, circles for the leaves, and circles for each edge space attached to each of the loops $\gamma^{(i)}$.
\end{remark}

\section{The gluing theorem}\label{sec:mainthm}


We are now ready to state our main gluing theorem. As explained in the subsequent remarks, the statement below is not the strongest available; however, it allows for a straightforward application to the limit group situation (see section \ref{sec:lg} for more details).

\begin{theorem}\label{thm:main}
	Let $X$ be a graph of spaces with finite underlying graph $\Gamma$, such that: \begin{enumerate}
		\item Vertex spaces are one of three types: \begin{description}
			\item[type P] single points,
			\item[type N] simplicial circles, or
			\item[type M] connected negatively curved simplicial 2-complexes that are neither points nor circles.
		\end{description} \label{item:vs}
		\item Edge spaces are either circles or points.
		\item Each circular edge space connects a type N vertex space to a type M vertex space. \label{item:endcircle}
		\item The images of attaching maps of circular edge spaces are (simplicial) closed geodesics.\label{item:li}
		\item For each type M vertex group in the corresponding graph of groups, the family of subgroups corresponding to incident edge groups is a malnormal family.\label{item:malfam}
	\end{enumerate}
	Then $X$ is homotopy equivalent to a negatively curved simplicial 2-complex $\bar{X}$. Moreover, if $X$ has compact vertex spaces, then $\bar{X}$ is compact.
\end{theorem}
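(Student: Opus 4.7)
The plan is to build $\bar X$ by replacing each circular edge cylinder with a hyperbolic annulus (Lemma \ref{lemma:annulus}) whose single singular vertex lands on the M side of the gluing, where the excess angle from a comparison complex will absorb it. First, to each type M vertex space $X_v$ I would apply Lemma \ref{lemma:transverse} together with the remark immediately following it, making the family of closed geodesics attached by incident circular edges pairwise \emph{disjoint}; this is the key use of the malnormality hypothesis. I would then pass to a slightly flatter comparison complex via Remarks \ref{rmk:compsimplex}--\ref{rmk:angleexcess}, acquiring a strict excess angle $\delta>0$ at every vertex on these geodesics.

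Before building annuli, observe that malnormality also forces each attaching map of a circular edge to an M vertex space to be a $1$-fold cover onto its image geodesic: if $H_e=\langle\gamma^{d}\rangle\le\langle\gamma\rangle$ with $d>1$ then $\gamma\notin H_e$ yet $\gamma^{-1}H_e\gamma=H_e\ne\{1\}$, contradicting malnormality of $H_e$. Now fix a common target curvature $\bar k<0$ and rescale each M vertex space to curvature $\bar k$ via Remark \ref{rmk:rescaling}. For each type N vertex $w$, choose the length $M_w$ of the circle $X_w$ small enough that $d_e M_w<L_e$ for every incident circular edge $e$, where $L_e$ is the (rescaled) length of the image geodesic in the relevant M vertex space and $d_e$ is the degree of the attaching map $\partial_{\bar e}\colon X_e\to X_w$; this is possible since the graph is finite. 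For each circular edge $e$, invoke Lemma \ref{lemma:annulus} with $A=d_e M_w$, $C=L_e$, and summit angle $\theta=\pi-2\delta$ to produce (after rescaling to $\bar k$) an annulus with fully geodesic bottom boundary of length $A$ and top boundary of length $C$ having one singular vertex of angle $>\pi-2\delta$. Glue the bottom to $X_w$ by the $d_e$-fold cover and the top to the geodesic in $X_v$ by a $1$-fold cover, subdividing simplicially so the triangulations match. Point edges contribute ordinary $1$-cells. This assembles a simplicial $2$-complex $\bar X$, homotopy equivalent to $X$ by Lemma \ref{lemma:goshe}, and compact whenever the vertex spaces of $X$ are.

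It remains to verify the Link Condition (Remark \ref{rmk:2dcat-1}) at every vertex $v$ of $\bar X$; in each case I would use Remark \ref{rmk:cat1} to reduce to a short-loop check. Interior vertices of an M vertex space not meeting any attached geodesic have unchanged links. At a generic vertex on such a geodesic, one new arc of length $\pi$ is added between link-points at distance $\ge\pi+2\delta$, so new loops have length $\ge 2\pi+2\delta$. At the unique image of the singular vertex on the M side, only one arc is added (here I use both $1$-foldness and disjointness of geodesics), of length $>\pi-2\delta$, giving new loops of length $>2\pi$. At a vertex on an N circle, every incident annulus contributes $d_e$ arcs of length exactly $\pi$ (the bottom boundary is fully geodesic), so any two arcs together form a loop of length precisely $2\pi$. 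Vertices on point edges or in P vertex spaces gain only isolated points in their links, which never obstruct CAT($1$).

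The main obstacle is the singular-vertex case on the M side. Every other step is essentially parameter bookkeeping, but this case requires the precise balance $\bigl(\pi-2\delta\bigr)+\bigl(\pi+2\delta\bigr)=2\pi$ between the annulus' angle deficit and the comparison-complex excess, and for this balance to yield a strict inequality it is crucial that exactly one singular arc appears in the affected link---something guaranteed by malnormality (via $1$-foldness and disjointness) and by no other hypothesis of the theorem.
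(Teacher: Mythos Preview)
Your strategy---transversality via Lemma~\ref{lemma:transverse}, excess angle from a comparison complex, hyperbolic annuli from Lemma~\ref{lemma:annulus}, then a link check---is exactly the paper's. The substantive difference is that you try to simplify the link check by arranging that \emph{exactly one} annulus arc lands at each M-side vertex, whereas the paper allows several and runs an inductive argument.

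There is a real gap in your simplification. Pairwise disjointness of the geodesics together with your ``1-foldness'' argument does \emph{not} guarantee that each geodesic is embedded: malnormality only shows the attached element is primitive, and primitive closed geodesics can self-intersect. At a self-intersection vertex of some $\gamma_e$, the annulus boundary passes through twice, so two arcs are added to the link---and your balance $(\pi-2\delta)+(\pi+2\delta)=2\pi$ no longer suffices, since a loop can run through both new arcs and two short original segments. The paper handles precisely this situation: it only makes the geodesics \emph{transverse}, then uses that every edge of the comparison link has length at least $\delta_v$, so any path through a previously attached arc must begin and end with an original link edge and hence has length at least $2\delta_v+(\pi-\delta_v)=\pi+\delta_v$; inductively, each new arc (of length $>\pi-\delta_v$) still yields only loops of length $>2\pi$. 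You need either this inductive argument or an additional step making each geodesic embedded (the fin technique of the remark does this too, but you would have to say so). A smaller issue: rescaling the annulus from Lemma~\ref{lemma:annulus} to curvature $\bar k$ changes its boundary lengths, so it will no longer match $L_e$ and $d_eM_w$; the paper simply lets the pieces have different curvatures and relies on the link condition, which is the cleanest fix.
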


\begin{remark}\label{rmk:thmconds}
 Firstly, note that all negatively curved simplicial 2-complexes are homotopy equivalent to one of the three types of vertex space listed, and so condition \ref{item:vs} requires nothing more than that vertex spaces are homotopy equivalent to negatively curved simplicial 2-complexes. Condition \ref{item:endcircle} in fact requires only that two type N vertex spaces are not connected by a circular edge space, since if a circular edge space connected two type M vertex spaces we could then subdivide the corresponding edge of $\Gamma$ and insert a circular vertex space in between. Condition \ref{item:li} is always achievable; indeed, if $K$ is a locally \cat{-1} space, then every conjugacy class in $\pi_1(K)$ is represented by a unique closed geodesic (see \cite{bh}). If $K$ is a simplicial complex, then by subdivision we may assume this closed geodesic is simplicial. Conjugacy classes in $\pi_1(K)$ correspond to free homotopy classes of unbased loops in $K$, and hence each attaching map is homotopic to an isometry to a simplicial closed geodesic. 
\end{remark}	
\begin{remark}
Condition \ref{item:malfam} can also be unravelled a little. As mentioned above, in each vertex space, the loops corresponding to circular edge spaces determine conjugacy classes in the fundamental group. As in Lemma \ref{lemma:transverse}, the malnormal family assumption then says that, whichever representatives of these conjugacy classes are chosen when defining the graph of groups, the corresponding edge subgroups have trivial intersection. The other requirement is that the subgroups are individually malnormal; which, for cyclic subgroups of a torsion-free \cat{-1} group, is equivalent to requiring that they are maximal infinite cyclic---that is, not generated by a proper power. This is because torsion-free \cat{-1} groups cannot contain any Baumslag-Solitar subgroup (see \cite{bh}).
\end{remark}

\begin{proof}[Proof of Theorem \ref{thm:main}] \begin{sloppypar}
	To begin, we consider $X$ as a topological graph of spaces, with specified metric on the type M vertex spaces only. We will describe how to metrize the rest of the vertex and edge spaces in the proof.
	
	Consider a type M vertex space $X_v$ of $X$. Note that $X_v$ together with the set $\left\{ \partial_e \left( X_e \right) \; \middle| \; e \in E(\Gamma), \; \iota(e)=v, \; X_e \simeq \mathbb{S}^1 \right\}$ of images of incident circular edge spaces satisfies the conditions of Lemma \ref{lemma:transverse}. We may therefore find a negatively curved 2-complex $\bar{X}_v$, equipped with a deformation retraction $r_v \colon \bar{X}_v \rightarrow X_v$, inclusion $i_v \colon X_v \rightarrow \bar{X}_v$, and a transverse set of loops $\left\{ \gamma_e \middle| \; e \in E(\Gamma), \; \iota(e)=v, \; X_e \simeq \mathbb{S}^1 \right\}$ such that $r_v \left( \gamma_e \right) = \partial_e \left( X_e \right)$. Without loss of generality (by taking a comparison complex if necessary) we may assume that $\bar{X}_v$ has an excess angle $\delta_v>0$ around each loop $\gamma_e$ (as in Remark \ref{rmk:angleexcess}) so that the angle subtended through each vertex by each loop $\gamma_e$ is at least $\pi+2\delta_v$. 
\end{sloppypar}	
	Now consider a type N vertex space $X_w$. For each circular incident edge space $X_e$ with $\tau(e)=w$, the attaching map $\partial_{\bar{e}}$ is a $d$ to 1 covering map for some $d=d(e)$. Choose a positive number $a_w$ satisfying: \[
	a_w< \min \left\{ \frac{l \left( \gamma_e \right)}{d(e)} \;\middle|\;e \in E(\Gamma), \; \tau(e)=w  \right\}.
	\]
		
We will now describe how to modify the complex $X$ into the complex $\bar{X}$. By Lemma \ref{lemma:goshe}, the two complexes will be homotopy equivalent.
	\begin{enumerate}
		\item Replace each type M vertex space $X_v$ with $\bar{X}_v$ as above, then replace each attaching map $i_v \circ \partial_e$ with a homotopic map to the closed geodesic $\gamma_e$. 
		\item Metrize each type N vertex space $X_w$ to have length $a_w$.
		\item For each circular edge space $X_e$, by condition \ref{item:endcircle}, we may assume $\iota(e)=v$ and $\tau(e)=w$ where $X_w$ is type N. After the above, the two ends of $X_e \times [0,1]$ are identified with closed geodesics of length $l \left( \gamma_e \right)$ and $a_w d(e)$. Since $a_w d(e) < l \left( \gamma_e \right)$, there exists by Lemma \ref{lemma:annulus} a negatively curved annulus $A_e$ with one geodesic boundary component of length $a_w d(e)$, and one boundary component of length $l \left( \gamma_e \right)$ subtending an angle greater than $\pi-\delta_{v}$ at a vertex and geodesic elsewhere. Therefore, we may replace $X_e \times [0,1]$ with (a copy of) $A_e$ such that the metrics on $A_e$, $X_v$ and $X_w$ are all compatible (ensuring we position the vertex of $A_e$ at some vertex of $X_v$).
		\item Triangulate $A_e$ to ensure it is simplicial.
		\item For each type P vertex space $X_v$, take a point $\bar{X}_v$. 
		\item For each edge $e$ such that $X_e$ is a point, attach a line between $\bar{X}_{\iota(e)}$ and $\bar{X}_{\tau(e)}$. Note that the endpoints of this line (that is, the attaching maps), as well as its length, are irrelevant from the perspective of homotopy, since the vertex spaces are path connected.
	\end{enumerate}		

	Topologically, to obtain $\bar{X}$ from $X$ we have simply replaced edge cylinders and vertex spaces with homotopy equivalent spaces, and attaching maps with homotopic maps. It follows from Lemma \ref{lemma:goshe} that $\bar{X}$ and $X$ are homotopy equivalent. Note that $\bar{X}$ is still, topologically, a graph of spaces, and it supports a global metric, but this is no longer the standard metric on a graph of spaces as described in section \ref{ss:mgos}.
	
	We must now check that $\bar{X}$ is a negatively curved simplicial 2-complex. By construction, it is a simplicial 2-complex, all of whose simplices are negatively curved. It is sufficient, therefore, to check the link condition on vertices. We already have that each vertex space $\bar{X}_v$, and each annulus $A_e$, is negatively curved, so it suffices to check the link condition for vertices at which the annuli $A_e$ are glued to vertex spaces.
	
	For vertices in type N vertex spaces, links consist of two vertices connected by several (possibly subdivided) arcs, of length $\pi$. These satisfy the link condition, since all circles in the space have length at least $2\pi$.
	
	Now let $\bar{X}_v$ be a type M vertex space, and let $x$ be a vertex of $\bar{X}_v$ contained in at least one $\gamma_e$. To obtain $\text{link}(x,\bar{X})$ from $\text{link}(x, \bar{X}_v)$, we glue on a number of arcs of length $>\pi-\delta_v$. We may prove by induction that the resulting space remains \cat{1}.
	
	Firstly, recall from above that the geodesics $\gamma_e$ subtend angles of at least $\pi+2\delta_v$ at each vertex. In particular, the first arc glued on to $\text{link}(x, \bar{X}_v)$ connects two points of distance at least $\pi+2\delta_v$, and is itself of length $>\pi-\delta_v$; Remark \ref{rmk:cat1} then says that the space remains \cat{1}.
	
	For subsequent arcs, let us assume for induction that after gluing on $r-1$ arcs, the space is still \cat{1}. Now suppose the $r$th arc is to be glued between points $a$ and $b$. By Lemma \ref{lemma:transverse}), $a$ and $b$ are distinct from any previous points to which arcs have been glued, and hence any path between $a$ and $b$ which is not contained in $\text{link}(x, \bar{X}_v)$ must begin and end with an edge from $\text{link}(x, \bar{X}_v)$. All such edges have length at least $\delta_v$, and hence any such path has length at least $2\delta_v + \pi-\delta_v=\pi+\delta_v$. On the other hand, any path connecting $a$ and $b$ which does lie in $\text{link}(x, \bar{X}_v)$ is of length at least $\pi+2\delta_v$ as noted before. Thus Remark \ref{rmk:cat1} applies again, and so the space again remains \cat{1}. 
	
	Therefore, the link condition holds for $\bar{X}$, and so $\bar{X}$ is a negatively curved simplicial complex. Moreover, it is clear by construction that if all type M vertex spaces of $X$ are compact, then $\bar{X}$ is compact. This completes the proof.
\end{proof}

\section{Limit groups}\label{sec:lg}

As discussed in the Introduction, the motivating application for Theorem \ref{thm:main} was to prove Theorem \ref{thm:limcat-1} below:

\begin{theorem}\label{thm:limcat-1}
	Let $G$ be a limit group. Then $G$ is \cat{-1} if and only if $G$ is $\delta$-hyperbolic.
\end{theorem}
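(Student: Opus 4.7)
The easy direction---that a \cat{-1} group is $\delta$-hyperbolic---is standard, since any geodesic \cat{-1} metric space is $\delta$-hyperbolic in the sense of Gromov \cite{bh}. For the converse, the plan is to induct on the hierarchical height of $G$ as a (hyperbolic) limit group, applying Theorem \ref{thm:main} at each step of the induction.

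The base case consists of free groups, infinite cyclic groups, and closed hyperbolic surface groups, each of which is the fundamental group of a compact negatively curved simplicial 2-complex: finite graphs embed into hyperbolic surfaces of genus at least two, which are themselves assembled from hyperbolic triangles. For the inductive step, I would appeal to the structure theory of limit groups due to Sela and Kharlampovich--Myasnikov: every non-elementary limit group admits a cyclic (JSJ) splitting whose vertex groups are either (a) abelian, (b) surface groups with cyclic boundary subgroups, or (c) rigid vertex groups that are themselves limit groups of strictly smaller hierarchical complexity. Since $G$ is assumed hyperbolic, no abelian subgroup has rank $\geq 2$, so every abelian vertex group is cyclic; the surface vertex groups are hyperbolic, hence \cat{-1}; and the rigid vertex groups are, by the inductive hypothesis, \cat{-1} and thus realisable as fundamental groups of compact negatively curved 2-complexes.

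I would realise this splitting as a graph of spaces $X$ whose underlying graph is (a refinement of) the JSJ graph: type M vertex spaces for the non-cyclic vertex groups, type N circles for the cyclic vertex groups, circular edge spaces with simplicial closed geodesic attaching maps (as permitted by Remark \ref{rmk:thmconds}), and point edge spaces where appropriate. Condition (3) of Theorem \ref{thm:main} can be arranged, as noted in Remark \ref{rmk:thmconds}, by subdividing any edge of $\Gamma$ joining two type M vertices and inserting an intermediate type N vertex space. Since $X$ is a graph of aspherical spaces realising the splitting of $G$, it is a $K(G,1)$; Theorem \ref{thm:main} then produces a homotopy equivalent compact negatively curved simplicial 2-complex $\bar X$, whose universal cover is locally \cat{k} for some $k<0$ and may be rescaled to be \cat{-1} via Remark \ref{rmk:rescaling}. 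The deck action of $G$ is geometric, yielding the desired \cat{-1} structure.

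The main obstacle is verifying condition (5) of Theorem \ref{thm:main}: at every type M vertex group, the family of incident edge subgroups must be malnormal. This is where the specific theory of limit groups is essential. Limit groups are CSA, so their maximal abelian subgroups are malnormal; under our hyperbolicity assumption these are maximal infinite cyclic. The edge groups arising in the cyclic JSJ splitting are maximal cyclic in each adjacent vertex group (they are centralisers of essential simple closed curves or boundary components), so each individual edge subgroup is malnormal. Two distinct edge subgroups cannot have nontrivial conjugate intersection, because such an intersection would lie in a common maximal cyclic subgroup, forcing the two edge groups to be conjugate---contrary to reducedness of the splitting. The delicate bookkeeping---ensuring at every stage of the induction that the rigid vertex groups produced are again \emph{hyperbolic} limit groups of strictly smaller height, and that the malnormal-family hypothesis is preserved---will require a careful invocation of the tower/ICE description of limit groups, and I expect this to be the most technical part of the argument.
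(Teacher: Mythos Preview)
Your overall strategy is sound, but it differs from the paper's primary route. The paper proves Theorem~\ref{thm:limcat-1} via Theorem~\ref{thm:limnc}, inducting not on a JSJ hierarchy for $G$ itself but on the \emph{height of an ambient \ice group} in which $G$ embeds (Theorem~\ref{thm:limgpice}). At each step the centralizer-extension $G' *_{\langle g\rangle}(\langle g\rangle\times\mathbb{Z}^n)$ induces a splitting of $H\le G$, and the type~M vertex groups embed in the \ice group $G'$ of strictly smaller height; this makes the induction manifestly well-founded without appealing to strong accessibility. Your JSJ approach is closer in spirit to the paper's Section~\ref{sec:apps}, where Proposition~\ref{prop:jsjnc} and Proposition~\ref{prop:jsjrigidsubgp} carry out essentially what you describe for arbitrary torsion-free hyperbolic groups, with the Strong Accessibility Theorem supplying termination of the hierarchy. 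So both routes work; the \ice route is more self-contained for limit groups, while the JSJ route is more general.

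There is one genuine gap in your malnormality argument. You write that two distinct incident edge subgroups cannot have nontrivially intersecting conjugates because this would force them to be conjugate, ``contrary to reducedness of the splitting.'' Reducedness does not prevent two edge groups at the same vertex from being conjugate there---indeed at a type~N vertex they typically are. The correct justification at type~M (rigid or surface) vertices is the $2$-acylindricity of the JSJ action, which for limit groups is a consequence of the CSA property (compare Lemma~\ref{lemma:jsjmalnormal} and the paper's explicit verification in the proof of Theorem~\ref{thm:limnc}, using maximality of the cyclic edge groups together with properties~\ref{item:csa} and~\ref{item:csaproper} of Theorem~\ref{thm:limgpprops}). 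You should also make precise the well-founded quantity on which you induct: either Sela's level in the constructible hierarchy, or invoke Strong Accessibility as the paper does in Section~\ref{sec:apps}.
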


This is a simplified version of Theorem \ref{thm:equivalence}, which will follow quickly from one of the defining characterisations of a limit group (see Theorem \ref{thm:limgpice}), but before we launch into this we will give some context as to the relevance and usefulness of limit groups. To this end, we begin by stating the simplest definition:

\begin{definition}\label{def:wrf}
A \emph{limit group} (also \emph{finitely generated fully residually free group}) is a finitely generated group $G$ such that, for any finite subset $1 \notin S \subset G$, there is a homomorphism $h \colon G \rightarrow \mathbb{F}$ to a non-abelian free group $\mathbb{F}$ such that $h$ is injective on $S$.
\end{definition}

Limit groups were first introduced in the study of equations over free groups; their importance lies in the fact that, in the study of the sets of solutions of these equations, limit groups correspond to irreducible varieties.
The original definition (motivating the name) was given by Sela \cite{sela2001}, and is quite different (although equivalent) to Definition \ref{def:wrf}. We refer the interested reader to \cite{sela2001}, alongside \cite{bestvinafeighn09} and \cite{wilton09}, for more information and references. 

We will use several facts concerning limit groups without proof, and we state these below for convenience. Properties \ref{item:limgptf}, \ref{item:limsg} and \ref{item:csa} are easy to see from the definition, and proofs of all the others can be found in \cite{sela2001} or \cite{bestvinafeighn09} except where otherwise indicated.

\begin{theorem}[Properties of limit groups]\label{thm:limgpprops}
	Let $G$ be a limit group. Then: \begin{enumerate}
			\item $G$ is torsion-free.\label{item:limgptf}
			\item \emph{\cite[Corollary 4.4]{sela2001}} $G$ is finitely presented. \label{item:limfp}
			\item Every finitely generated subgroup of $G$ is a limit group. \label{item:limsg}
			\item  \emph{\cite[Corollary 4.4]{sela2001}} Every abelian subgroup of $G$ is finitely generated.\label{item:limabfg}
			\item \emph{\cite[Lemma 1.4]{sela2001}} Every nontrivial abelian subgroup of $G$ is contained in a unique maximal abelian subgroup.\label{item:csa}
			\item \emph{\cite[Lemma 1.4]{sela2001}} Every maximal abelian subgroup of $G$ is malnormal.\label{item:csaproper}
			\item  \emph{\cite[Lemma 2.1]{sela2001}} If $G=A *_C B$ for $C$ abelian, then any non-cyclic abelian subgroup $M \subset G$ is conjugate into $A$ or $B$.\ \label{item:noncycabconj}
			\item \emph{\cite{alibegovicbestvina06}} $G$ is \cat{0}.\label{item:limgpscat0}
		\end{enumerate}
\end{theorem}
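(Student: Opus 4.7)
The plan is to verify each of the eight properties separately. The author indicates that items \ref{item:limgptf}, \ref{item:limsg} and \ref{item:csa} are straightforward consequences of Definition \ref{def:wrf}, while the remaining items are substantial theorems from \cite{sela2001}, \cite{bestvinafeighn09} and \cite{alibegovicbestvina06} that I would simply quote. So the proposal is essentially to handle the three ``easy'' items in detail and to defer the others to the cited literature.

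For item \ref{item:limgptf} (torsion-freeness), suppose $g \in G$ has finite order $n > 1$. Applying the fully residually free property to the singleton $S = \{g\}$ yields a homomorphism $h \colon G \to \mathbb{F}$ with $h(g) \neq 1$; but $h(g)^n = h(g^n) = 1$, contradicting the torsion-freeness of the free group $\mathbb{F}$. For item \ref{item:limsg} (finitely generated subgroups are limit groups), let $H \leq G$ be finitely generated and let $1 \notin S \subset H$ be finite. Then $S$ is a finite subset of $G \setminus \{1\}$, so Definition \ref{def:wrf} produces $h \colon G \to \mathbb{F}$ injective on $S$; the restriction $h|_H$ then witnesses $H$ as fully residually free.

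For item \ref{item:csa} (every nontrivial abelian subgroup lies in a unique maximal abelian subgroup), the central point is \emph{commutation transitivity}: if $a, b, c \in G \setminus \{1\}$ satisfy $[a,b] = [b,c] = 1$, then $[a,c] = 1$. If not, I would apply Definition \ref{def:wrf} to $S = \{a, b, c, [a,c]\}$, obtaining $h \colon G \to \mathbb{F}$ injective on $S$. Then $h(a)$ and $h(c)$ both commute with the nontrivial element $h(b)$ in $\mathbb{F}$; since centralisers of nontrivial elements of a free group are cyclic, $h(a)$ and $h(c)$ lie in a common cyclic subgroup and therefore commute, giving $h([a,c]) = 1$ and contradicting injectivity on $S$. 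Commutation transitivity immediately implies that $C_G(g)$ is abelian for every nontrivial $g \in G$, and that any abelian subgroup containing $g$ is contained in $C_G(g)$; hence $C_G(g)$ is the unique maximal abelian subgroup of $G$ containing $g$.

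The remaining items I would handle purely by citation. Items \ref{item:limfp} (finite presentability) and \ref{item:limabfg} (finite generation of abelian subgroups) both rely on Sela's structure theorem realising limit groups as iterated extensions of centralisers built from free and surface groups, and are taken from \cite{sela2001}; the main obstacle to a self-contained treatment lies here, since that structure theorem is deep. Item \ref{item:csaproper} (malnormality of maximal abelian subgroups) can be derived from commutation transitivity together with one more application of the definition to rule out nontrivial elements of the normaliser outside the subgroup, but I would simply cite \cite[Lemma 1.4]{sela2001}. Item \ref{item:noncycabconj} is a Bass–Serre-theoretic argument from \cite[Lemma 2.1]{sela2001}, using that a non-cyclic abelian subgroup acting on the Bass–Serre tree of $A *_C B$ with abelian edge stabiliser must fix a vertex. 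Finally, item \ref{item:limgpscat0} is the main theorem of \cite{alibegovicbestvina06}, which constructs an explicit \cat{0} complex on which $G$ acts geometrically using the structure theorem.
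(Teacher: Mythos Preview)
Your proposal is correct and matches the paper's treatment: the paper does not prove this theorem at all, but simply states the properties, remarking that items \ref{item:limgptf}, \ref{item:limsg} and \ref{item:csa} are easy from Definition \ref{def:wrf} and that the rest are in the cited references. Your write-up of the three ``easy'' items is exactly the standard argument via full residual freeness and commutation transitivity, and your deferral of the remaining items to the literature is precisely what the paper does.
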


Much of the work that has been done on limit groups, including the proof of \ref{item:limgpscat0} above, depends upon a powerful structure theory. For full details of this (often expressed in terms of \emph{constructible limit groups}), the reader is referred again to \cite{sela2001} and subsequent papers in that series, as well as the expositions \cite{bestvinafeighn09, wilton09, champetier05} (see also \cite{kharlampovich98a} and subsequent papers). A particularly elegant result that comes out of this theory is Theorem \ref{thm:limgpice} below, proved in \cite{kharlampovich98} and \cite{champetier05}. To state it, we must first introduce the following notion. Our overview follows that given in \cite{wilton08}.

\begin{definition}
Let $G'$ be a group, let $g \in G'$, and let $C(g)$ denote the centralizer of $g$. Let $n\geq1$. Then the group $G' *_{C(g)} \left( C(g) \times \mathbb{Z}^n \right)$ is called a \emph{centralizer extension of $G'$ by $C(g)$}.
\end{definition}

\begin{definition}
A group is said to be an \emph{iterated centralizer extension} if it is either a finitely generated free group, or can be obtained from one by taking repeated centralizer extensions. The class of iterated centralizer extensions is denoted \ice; sometimes we will refer to a group in \ice as an \emph{\ice group}. An \ice group obtained by taking a free group and then taking a centralizer extension $n$ times is said to have \emph{height $n$}.
\end{definition}

\begin{theorem}[\cite{kharlampovich98, champetier05}]\label{thm:limgpice}
	Limit groups coincide with finitely generated subgroups of \ice groups.
\end{theorem}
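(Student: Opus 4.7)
The plan is to prove the two inclusions separately, as the theorem is a bi-implication: every finitely generated subgroup of an ICE group is a limit group, and conversely every limit group embeds as a finitely generated subgroup of some ICE group.

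For the ``easy'' direction ($\Leftarrow$), I would first reduce to proving that every ICE group is itself a limit group, using property \ref{item:limsg} of Theorem \ref{thm:limgpprops} (that finitely generated subgroups of limit groups are limit groups). I would then induct on the height of the ICE group. The base case (finitely generated free groups are limit groups) is immediate from Definition \ref{def:wrf}, taking the homomorphism to be the identity. For the inductive step, suppose $G'$ is a limit group and let $G = G' *_{C(g)} (C(g) \times \mathbb{Z}^n)$. Given a finite subset $S \subset G \setminus \{1\}$, I would put each element of $S$ in Bass--Serre normal form with respect to this amalgamated product and construct a family of \emph{retractions} $\rho_k \colon G \to G'$ that fix $G'$ pointwise and send a chosen basis $t_1, \dotsc, t_n$ of $\mathbb{Z}^n$ to powers $g^{k_1}, \dotsc, g^{k_n}$. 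A standard pigeonhole/length argument shows that for $k=(k_1, \dotsc, k_n)$ with all $k_i$ sufficiently large and growing sufficiently fast, each $\rho_k$ is injective on $S$. Finally, since $G'$ is a limit group, a homomorphism to a free group injective on $\rho_k(S)$ exists, and composing gives the required map witnessing full residual freeness for $G$.

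The ``hard'' direction ($\Rightarrow$) requires the structure theory of limit groups and is where the bulk of the work lies. My approach would be via the class of \emph{constructible limit groups} (CLGs), defined hierarchically: level-$0$ CLGs are finitely generated free groups, finitely generated free abelian groups, and fundamental groups of closed surfaces of Euler characteristic at most $-2$; level-$(n+1)$ CLGs are built from level-$n$ CLGs by generalized doubles along maximal abelian subgroups or by extension of centralizers, subject to a ``strictness'' condition ensuring the new piece maps into something non-abelian in a suitable way. The first major step, which I would cite from the literature (Sela; Kharlampovich--Myasnikov; Champetier--Guirardel), is that every limit group is a CLG --- this is the deep content, coming out of Sela's shortening argument and the analysis of Makanin--Razborov diagrams. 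Granted that, I would induct on the level of the CLG hierarchy to produce an embedding into an ICE group.

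For the inductive embedding step, the base cases are handled directly: free groups are already ICE, free abelian groups embed in centralizer extensions of a free group (since $\mathbb{Z}^n = \mathbb{Z} \times \mathbb{Z}^{n-1}$ is obtained iteratively), and surface groups of negative Euler characteristic embed in free-by-cyclic groups that are obtained as iterated centralizer extensions (via the classical fact that closed surface groups are subgroups of $F_2 \rtimes \mathbb{Z}$, or by direct construction using Baumslag's work on residual freeness of surface groups). For the inductive step, given an embedding $G' \hookrightarrow H'$ into an ICE group, I would show that both a centralizer extension and a generalized double of $G'$ embed into a suitable centralizer extension of $H'$, exploiting that centralizers of non-trivial elements in ICE groups are abelian (by property \ref{item:csa} applied to ICE, themselves limit groups) and that the amalgamating subgroups lie in centralizers. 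The main obstacle, and the step I would have to treat most carefully, is this embedding of generalized doubles into ICE: the two sides of the double share a maximal abelian subgroup, and one must realize this double inside an ICE by enlarging the centralizer on one side enough to contain a conjugate of the other side, which is the heart of the Kharlampovich--Myasnikov argument.
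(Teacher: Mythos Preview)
The paper does not prove this theorem at all: it is stated with a citation to \cite{kharlampovich98, champetier05} and then immediately used, with no argument given. So there is no ``paper's own proof'' to compare your proposal against.

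That said, your outline is a reasonable sketch of the standard argument found in those references (and in the expository accounts \cite{bestvinafeighn09, wilton09, champetier05} that the paper cites). The easy direction via retractions $\rho_k$ sending the new $\mathbb{Z}^n$-generators to high powers of $g$ is exactly the classical ``big powers'' argument going back to Baumslag. For the hard direction, you are right that the deep input is that limit groups are constructible (via Sela's shortening argument or the Kharlampovich--Myasnikov machinery), and that the remaining work is an inductive embedding of each constructible piece into an \ice group. One small caution: your handling of the surface base case is slightly off --- closed surface groups are not in general subgroups of $F_2 \rtimes \mathbb{Z}$, and the usual route is instead to realise a surface group as a subgroup of a double of a free group along a maximal cyclic subgroup, which is itself an \ice group; this is essentially Baumslag's original observation. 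With that adjustment, your plan matches the literature, but be aware that for the purposes of \emph{this} paper the theorem is simply a black box.
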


\begin{remark}\label{rmk:limgpmaxcyclic}[see also \cite[Remark 1.14]{wilton08}]
	If $G$ is an \ice group (and hence a limit group by Theorem \ref{thm:limgpice}) which is not free, then it follows from property \ref{item:csa} of Theorem \ref{thm:limgpprops} that centralizers in $G$ are abelian. Since $G$ is in \ice, there is an \ice group $G'$ such that $G=G' *_{C(g)} \left( C(g) \times \mathbb{Z}^n \right)$. If $C(g)$ is non-cyclic, then by property \ref{item:noncycabconj}, $C(g)$ is conjugate into one of the two components in the iterated centralizer decomposition of $G'$; by induction, it follows that $C(g)$ is conjugate into some previously attached $ C(g') \times \mathbb{Z}^m$. At this stage, we could therefore have attached $ C(g') \times \mathbb{Z}^{m+n}$ instead. It follows that, when building limit groups, we may assume all centralizer extensions are by (infinite) cyclic centralizers. 
\end{remark}

A consequence of Theorem \ref{thm:limgpice} is that there is a natural graph of spaces decomposition for any \ice group $G$. If $G$ is free, it is a single vertex space: a compact graph with fundamental group $G$. Otherwise, $G=G' *_{\left< g \right>} \left( \left< g \right> \times \mathbb{Z}^n \right)$, where we can assume (by induction) that $G'=\pi_1(Y')$ for some graph of spaces $Y'$. Then $G$ has a graph of spaces decomposition $Y$ with underlying graph an edge, one vertex space $M=Y'$, one vertex space $N$ isomorphic to the $n+1$-torus $\mathbb{T}^{n+1}$, and edge space a circle $A$. The attaching maps send $A$ to a closed curve $\gamma \subset M$ representing $g$ in $\pi_1(M)=G'$, and to a coordinate circle of $N=\mathbb{T}^{n+1}$. We may assume without loss of generality that $M$ and $N$ are simplicial, the attaching maps are combinatorial local isometries, and their images are simplicial closed geodesics.

\begin{remark}\label{rmk:limgpgos}
	The above graph of spaces decomposition for an \ice group $G$ induces a graph of spaces decomposition for any subgroup $H<G$. Therefore, any limit group $H$ has a graph of spaces decomposition $X$ with edge spaces either circles or points, and vertex spaces covering spaces of either $M$ or $N$. Moreover, since all limit groups are finitely generated, we can assume that $X$ has finite underlying graph. 
\end{remark}

We are now ready to prove the following consequence of Theorem \ref{thm:main}.

\begin{theorem}\label{thm:limnc}
	Let $H$ be a limit group which does not contain any subgroup isomorphic to $\mathbb{Z}^2$, and let $X$ be the graph of spaces induced by an embedding of $H$ into an \ice group. Then there exists a compact negatively curved simplicial 2-complex $\bar{X}$ which is homotopy equivalent to $X$.
\end{theorem}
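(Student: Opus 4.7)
The plan is to apply Theorem \ref{thm:main} to the graph of spaces $X$ supplied by Remark \ref{rmk:limgpgos}, proceeding by induction on the height of an ICE group $G$ containing $H$. For the base case, $H$ is a finitely generated subgroup of a free group and is itself free; the graph of spaces $X$ is then a finite graph, to which Theorem \ref{thm:main} applies with only type P vertex spaces and point edge spaces, producing a graph $\bar X$ (a degenerate negatively curved 2-complex).

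For the inductive step, write $G = G' *_{\langle g\rangle} (\langle g\rangle \times \mathbb{Z}^n)$, where $\langle g\rangle$ may be assumed infinite cyclic by Remark \ref{rmk:limgpmaxcyclic} and $G'$ has strictly smaller height. The vertex spaces of $X$ are then covers of the ICE vertex space $M$ of $G'$ or of the torus vertex space $N \cong \mathbb{T}^{n+1}$, and the corresponding vertex groups of $H$ are finitely generated subgroups of conjugates of $G'$ or of $\mathbb{Z}^{n+1}$ respectively. In particular, by Theorem \ref{thm:limgpprops}\ref{item:limsg}, each vertex group is itself a $\mathbb{Z}^2$-free limit group. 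The no-$\mathbb{Z}^2$ hypothesis forces every subgroup of $\mathbb{Z}^{n+1}$ inside $H$ to be cyclic or trivial, so each $N$-covering vertex space is homotopy equivalent to a circle (type N) or a point (type P). For the $M$-covering vertex spaces, the inductive hypothesis yields a compact negatively curved simplicial 2-complex in the same homotopy class, which by Remark \ref{rmk:thmconds} may be classified as type P, N, or M; Lemma \ref{lemma:goshe} allows us to substitute these homotopy equivalent models without changing the homotopy type of $X$.

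The remaining hypotheses are checked as follows. Edge spaces are circles or points directly from the construction; condition \ref{item:endcircle} is arranged by inserting intermediate circular vertex spaces wherever two would-be type M vertex spaces are joined by a circle, as permitted by Remark \ref{rmk:thmconds}; and condition \ref{item:li} is obtained by straightening each attaching map to the unique closed geodesic in its free homotopy class and subdividing to make this simplicial. The main obstacle is condition \ref{item:malfam}, the malnormal family condition at each type M vertex group $H_v$. Each incident edge subgroup has the form $H_v \cap x\langle g\rangle x^{-1}$ for some $x \in G$, and is infinite cyclic. It is moreover maximal abelian in $H_v$: any element of $H_v$ commuting with a generator lies in the centralizer of that element in $G$, which by construction of the ICE step equals the corresponding conjugate of $\langle g\rangle$, and the only possible enlargement to a $\mathbb{Z}^2$ is excluded by hypothesis. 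Property \ref{item:csaproper} then gives malnormality of each such maximal abelian subgroup in $H_v$, and property \ref{item:csa} upgrades this to the malnormal family condition, since two distinct maximal abelian subgroups of a CSA group have trivial intersection and no conjugate into one another.

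With all five hypotheses of Theorem \ref{thm:main} verified, the theorem yields the required compact negatively curved simplicial 2-complex $\bar X$ homotopy equivalent to $X$. A subsidiary point to nail down is that each vertex space may be taken compact; this follows inductively from the finiteness of the underlying graph of $X$, the finite generation of $H$ propagating to the vertex groups via the malnormality of edge groups in the Bass--Serre action, and Theorem \ref{thm:limgpprops}\ref{item:limfp}, which ensures every vertex group is a finitely presented limit group and hence admits a compact $K(\pi,1)$ of one of the three prescribed types.
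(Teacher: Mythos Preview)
Your overall architecture matches the paper's: induct on ICE height, replace vertex spaces using the inductive hypothesis and Lemma~\ref{lemma:goshe}, then feed the resulting graph of spaces into Theorem~\ref{thm:main}. The base case, the classification of vertex spaces, and the straightening of attaching maps are all fine.

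The gap is in your verification of condition~\ref{item:malfam}. You correctly argue that each incident edge subgroup is maximal abelian in $H_v$ and hence individually malnormal by property~\ref{item:csaproper}. But the sentence ``two distinct maximal abelian subgroups of a CSA group have trivial intersection and no conjugate into one another'' is false: in the free group $\langle a,b\rangle$, the subgroups $\langle a\rangle$ and $\langle bab^{-1}\rangle$ are distinct maximal abelian subgroups which are conjugate. The CSA property only tells you that if $x^{-1}\langle a\rangle x\cap\langle b\rangle\neq\{1\}$ then $x^{-1}\langle a\rangle x=\langle b\rangle$; it does \emph{not} rule out the latter. So CSA alone does not give the malnormal \emph{family} condition.

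The paper closes this gap with an argument specific to the induced splitting. Two distinct edge groups $\langle a\rangle,\langle b\rangle$ at $H_v$ arise from edges in distinct $H_v$-orbits in the Bass--Serre tree, so while $a$ and $b$ are conjugate in the ambient vertex group $G_v$ of $G$ (there being only one edge group $\langle g\rangle$ upstairs), the conjugating element cannot lie in $H$. If one also had $h^{-1}ah=b$ for some $h\in H_v$, then the product of the two conjugators would centralize $a$; since $C_{G_v}(g)=\langle g\rangle$ by Remark~\ref{rmk:limgpmaxcyclic}, this forces the $G_v$-conjugator into $H$, a contradiction. A short CSA/maximal-cyclic argument then rules out $t^{-1}a^pt=b^q$ for any $t\in H_v$. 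You need some argument of this shape; the CSA shortcut you invoke does not exist.

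Two minor points: the finite generation of vertex groups follows from $H$ being finitely generated together with the edge groups being finitely generated (cyclic) over a finite graph---malnormality plays no role there. And compactness of the type~M vertex spaces comes directly from the inductive hypothesis, not from finite presentability.
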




\begin{proof}
	The proof uses Theorem \ref{thm:limgpice}, along with Theorem \ref{thm:main} and induction on height. 
	
	Clearly the result holds if $H$ embeds into an \ice group of height 0. So suppose $H$ embeds into an \ice group $G=\pi_1 \left( M *_A N \right)$ of height $n$, and assume the result is proven for all limit groups that do not contain $\mathbb{Z}^2$ and that embed in \ice groups of height $\leq n-1$. The graph of spaces $X$ for $H$ has two types of vertex space: those mapping to $M$ and those mapping to $N$. Call these type M and type N respectively. 
	
	Edge spaces of $X$ are either lines or circles. Since $H$ is finitely generated, this implies that the vertex groups are all finitely generated. Since $H$ does not contain $\mathbb{Z}^2$, each type M vertex group is therefore a limit group that does not contain $\mathbb{Z}^2$, and each type N vertex space is homotopy equivalent to either a point or a circle. By the inductive hypothesis, each type M vertex space is therefore homotopy equivalent to a compact negatively curved simplicial 2-complex. 
	
	By Lemma \ref{lemma:goshe}, we may replace vertex and edge spaces $X_v$, $X_e$ of $X$ with homotopy equivalent spaces $X'_v$, $X'_e$, to obtain a graph of spaces $X'$ with the same homotopy type as $X$. We do this as follows:
	
		\begin{enumerate}
		\item For each edge space $X_e$ which is a line, let $X'_e$ be a point.
		\item For each edge space $X_e$ which is a circle, let $X'_e$ be a circle.
		\item For each type N vertex space $X_v$ which is homotopy equivalent to a point, let $X'_v$ be a point.
		\item For each type N vertex space $X_v$ which is homotopy equivalent to a circle, let $X'_v$ be a simplicial circle.
		\item For each type M vertex space $X_v$, apply the inductive hypothesis to find a homotopy equivalent compact negatively curved simplicial 2-complex $X'_v$.
	\end{enumerate}
	
	As in the proof of Theorem \ref{thm:main}, the attaching maps are defined by composing the attaching maps in $X$ with the homotopy equivalences applied to the vertex spaces, followed by a further homotopy to ensure that the images of attaching maps of circular edge spaces are closed geodesics. That is, once we have fixed an edge and vertex space, we choose as our attaching map a local isometry which represents the corresponding attaching map in the graph of groups. As before, for those edge spaces which are points, this can be any map.
	
	At this stage, $X'$ is a compact graph of spaces satisfying conditions \ref{item:vs} to \ref{item:li} of Theorem \ref{thm:main}. To show that it also satisfies condition \ref{item:malfam}, note that the set of non-trivial incident edge subgroups in a type M vertex group of $X'$ is a set of cyclic subgroups of a limit group. We claim that these cyclic subgroups are maximal cyclic. By Remark \ref{rmk:limgpmaxcyclic} this is true in the \ice group $G$, and so it follows from the definition of the induced splitting that edge subgroups are also maximal cyclic in type M vertex groups of $X'$. Since $H$ does not contain $\mathbb{Z}^2$, they are therefore maximal abelian, and so malnormality of individual edge subgroups follows from property \ref{item:csaproper} of Theorem \ref{thm:limgpprops}. 
	
	It remains to show that if $a$ and $b$ generate two distinct edge subgroups of a type M vertex group $H_v =\pi_1\left(X'_v\right)$ of $X'$, then conjugates of $\left<a\right> \subset H_v$ and conjugates of $\left<b\right> \subset H_v$ have trivial intersection in $H_v$. Since the \ice group $G$ has only one edge group (which we may take without loss of generality to be $\left<a\right>$) it follows from the definition of the induced splitting that $a$ and $b$ are conjugate as elements of $G$, say $g^{-1}ag=b$, and that the conjugating element $g$ is contained in the vertex group $G_v$ ($=\pi_1(M)$) of $G$, but not in the subgroup $H$. If $a$ and $b$ are also conjugate in $H$, say $h^{-1}ah=b$, then $a$ commutes with $gh^{-1}$ in $G$. Since (by definition of a centralizer extension) $C_{G_v} \left(a\right)=\left<a\right>$, it follows that $gh^{-1} \in \left<a\right>$; in particular, $g \in H$, which is a contradiction. Therefore, $a$ and $b$ cannot be conjugate in $H$.
	
	Now, suppose $t \in H_v$ satisfies $t^{-1}a^p t= b^q$. Since $H_v$ does not contain $\mathbb{Z}^2$, property \ref{item:csa} implies that the two cyclic subgroups generated by $t^{-1}a t$ and $b$ must coincide; but since the edge groups are maximal cyclic, neither $t^{-1}a t$ nor $b$ is a proper power. Thus $t^{-1}a t=b$, which is a contradiction according to the previous paragraph. It follows that conjugates of $\left<a\right> \subset H_v$ and conjugates of $\left<b\right> \subset H_v$ have trivial intersection as required.
	
	It follows that $X'$ satisfies all the conditions of Theorem \ref{thm:main}, and so we can apply it to find the complex $\bar{X}$ as required.
\end{proof}

In summary, we have proved the following theorem:

\begin{theorem}\label{thm:equivalence}
	Let $G$ be a limit group. Then the following are equivalent: \begin{enumerate}
		\item $G$ is hyperbolic. \label{item:hyp}
		\item $G$ is \cat{-1}. \label{item:c-1}
		\item $G$ has \cat{-1} dimension 2. \label{item:c-1dim}
		\item $G$ does not contain $\mathbb{Z}^2$. \label{item:noz2}
	\end{enumerate}
\end{theorem}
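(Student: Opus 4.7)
The plan is to close the cycle of implications $(3) \Rightarrow (2) \Rightarrow (1) \Rightarrow (4) \Rightarrow (3)$, where all but the last are essentially immediate and the last is the substantive content carried by Theorem \ref{thm:limnc}.

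For $(3) \Rightarrow (2)$, I would simply observe that a group of \cat{-1} dimension $2$ is by definition the fundamental group of a compact negatively curved 2-complex, whose universal cover is \cat{-1} and admits a geometric action of $G$; hence $G$ is \cat{-1}. For $(2) \Rightarrow (1)$, I would cite the fact (recalled in the Introduction) that every \cat{-1} group is $\delta$-hyperbolic.

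For $(1) \Rightarrow (4)$, I would use the standard fact from \cite{bh} that every abelian subgroup of a $\delta$-hyperbolic group is virtually cyclic. Since $\mathbb{Z}^2$ is abelian but not virtually cyclic, it cannot embed in $G$.

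The main content is $(4) \Rightarrow (3)$, and this is where I would invoke the machinery developed earlier. Since $G$ is a limit group, Theorem \ref{thm:limgpice} provides an embedding $G \hookrightarrow G'$ into an \ice group $G'$, and Remark \ref{rmk:limgpgos} then yields a graph of spaces $X$ with finite underlying graph, circular or point edge spaces, and vertex spaces covering either $M$ or $N$, whose fundamental group is $G$. Assuming that $G$ contains no $\mathbb{Z}^2$, I would apply Theorem \ref{thm:limnc} directly to produce a compact negatively curved simplicial 2-complex $\bar{X}$ homotopy equivalent to $X$. Since $\bar{X}$ is a compact $K(G,1)$ of dimension $2$ whose universal cover is \cat{-1}, and $G$ acts geometrically on this universal cover, $G$ has \cat{-1} dimension at most $2$; the dimension cannot drop to $1$ unless $G$ is free (and a free limit group trivially satisfies all four conditions, with \cat{-1} dimension $1$ rather than $2$, so this case must be stated as a side remark or absorbed into the convention that the dimension is at most $2$). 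The main obstacle I would anticipate is purely bookkeeping: verifying that the free case fits the statement as written, and confirming that the $K(G,1)$ property genuinely transfers from $X$ to $\bar{X}$ via Lemma \ref{lemma:goshe} (which requires noting that vertex and edge spaces of $X$ are themselves aspherical, by induction using the previous step in the \ice hierarchy).
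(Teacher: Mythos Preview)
Your proposal is correct and follows exactly the same cycle of implications $(3)\Rightarrow(2)\Rightarrow(1)\Rightarrow(4)\Rightarrow(3)$ as the paper, with the substantive step $(4)\Rightarrow(3)$ being precisely Theorem~\ref{thm:limnc}. Your side remark about the free case (where the \cat{-1} dimension is $1$ rather than $2$) is a fair observation that the paper does not address explicitly; it is naturally absorbed by reading ``dimension $2$'' as ``dimension at most $2$.''
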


\begin{proof}
	\ref{item:c-1dim} $\implies$ \ref{item:c-1} follows from our definition of \cat{-1} dimension (Remark \ref{rmk:cat-1dimcpct}). \ref{item:hyp} $\implies$ \ref{item:noz2} and \ref{item:c-1} $\implies$ \ref{item:hyp} are true for any group. \ref{item:noz2} $\implies$ \ref{item:c-1dim} is precisely Theorem \ref{thm:limnc}.
\end{proof}

Note in particular that this provides an alternative proof of the fact due to Sela \cite[Corollary 4.4]{sela2001}] that a limit group $G$ is hyperbolic if and only if every abelian subgroup is cyclic. Sela's proof also uses a combination theorem, namely that of Bestvina and Feighn \cite{bestvinafeighn92}.

\section{Further applications of the gluing theorem}\label{sec:apps}

There are several contexts in which cyclic splittings of groups are of interest, and our gluing theorem therefore has the potential to shift the question of whether such groups are \mbox{\cat{-1}} to their vertex groups under a cyclic splitting. With this in mind, we can give two more consequences of Theorem \ref{thm:main}. The first consequence concerns JSJ decompositions, for which we will need to recall some technical background. The second consequence concerns graphs of free groups with cyclic edge groups, and this will follow quickly from the JSJ material.  

\subsection{JSJ decompositions of torsion-free hyperbolic groups}

\sloppy JSJ decompositions were originally invented to study toroidal decompositions of 3-manifolds \cite{jaco1979, johannson1979}, and can be thought of as the second stage in the decomposition of a 3-manifold---the stage after cutting along essential spheres. Analogous notions for groups have been studied by Bowditch \cite{bowditch98} (in the case of hyperbolic groups) and Rips--Sela \cite{rips1997} (in the case of general finitely presented groups), among many other generalisations. We will give only the details essential for our argument, and for these we follow \cite{bowditch98}.

Dunwoody's Accessibility Theorem \cite{dunwoody85} shows that any hyperbolic group can be decomposed as a graph of groups whose vertex groups are either finite or one-ended, and whose edge groups are finite. In the torsion-free case, this reduces to a decomposition with trivial edge groups and one-ended, hyperbolic vertex groups (this is also called the Grushko decomposition). Analogously to the 3-manifold setting, the JSJ decomposition then describes how to decompose these components further, and in the torsion-free case, this is a decomposition along infinite cyclic subgroups. 

The following statement is a special case of \cite[Theorem 0.1]{bowditch98}:

\begin{theorem}[JSJ decomposition for torsion-free hyperbolic groups]\label{thm:jsj}
	Let $\Gamma$ be a torsion-free, one-ended hyperbolic group. Then $\Gamma$ is the fundamental group of a well-defined, finite, canonical graph of groups with infinite cyclic edge groups and vertex groups of three types: \begin{description}
			\item[type S] fundamental groups of non-elementary surfaces, whose incident edge groups correspond precisely to the subgroups generated by the boundary components; 
			\item[type N] infinite cyclic groups, and
			\item[type M] non-elementary hyperbolic groups not of type S or type N.
		\end{description}
	These three types are mutually exclusive, and no two of the same type are adjacent.
\end{theorem}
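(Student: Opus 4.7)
The plan is to follow Bowditch's topological approach, extracting the JSJ decomposition from the structure of the Gromov boundary $\partial \Gamma$. Since $\Gamma$ is torsion-free, one-ended, and hyperbolic, $\partial \Gamma$ is a connected, locally connected, metrizable compactum with no global cut point. Two-ended (hence, by torsion-freeness, infinite cyclic) subgroups of $\Gamma$ that give rise to cyclic splittings correspond exactly to \emph{cut pairs} in $\partial \Gamma$: unordered pairs whose removal disconnects the boundary non-trivially.

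First I would identify all essential cut pairs in $\partial \Gamma$ and introduce an equivalence relation that records when two cut pairs are ``linked'' in a cyclic fashion. The equivalence classes fall into two combinatorial types: either an isolated cut pair, or a \emph{necklace}---a maximal cyclically linked family of cut pairs. Necklaces are exactly the configurations produced by boundary curves of a non-elementary hyperbolic surface group, so each necklace class should be interpreted as a type S (surface) vertex group; isolated cut pairs should contribute infinite cyclic edge groups, and, where two necklaces meet at a common pair, a type N vertex group. The remaining connected components of $\partial\Gamma$ after removing the cut pair structure become the type M (rigid) pieces.

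Next I would translate this topological decomposition into the desired graph of groups, taking the bipartite nerve whose vertices are (a) rigid components / cyclic blocks and (b) the necklace classes, with edges being incidences. That two same-type vertex groups are never adjacent is forced by construction: between two surface blocks one inserts the intermediary cyclic vertex, between two cyclic blocks one collapses to a single cyclic group, and a type M block never abuts another type M block because the separating cut pair would itself be absorbed into a rigid piece. Finiteness of the decomposition is guaranteed because $\Gamma$ acts cocompactly on the space of cut pairs, and canonicity is automatic since the whole construction is a $\Gamma$-equivariant invariant of the topology of $\partial \Gamma$.

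The main obstacle is the necklace analysis: proving that the local combinatorics of cut pairs organizes into a well-defined disjoint union of necklaces plus isolated cut pairs, and that each necklace is indeed stabilized by a non-elementary surface subgroup with the correct peripheral structure. This requires careful point-set arguments on continua (to verify that the cyclic ordering is honest and finite) together with Bowditch's identification of convergence actions on circles with virtually surface groups; in our torsion-free setting this identification then gives the type S vertex groups and correctly pins down their incident edge groups as the boundary-component subgroups.
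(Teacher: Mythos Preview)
The paper does not give a proof of this theorem at all: it is quoted as a special case of Bowditch's \cite[Theorem 0.1]{bowditch98} and used as a black box. So there is no ``paper's own proof'' to compare against.

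Your sketch is a reasonable outline of Bowditch's actual argument---analysing local cut points and cut pairs in $\partial\Gamma$, grouping them into necklaces versus isolated pairs, and invoking the convergence-group characterisation of Fuchsian actions to recognise the surface pieces. As a high-level plan it is correct in spirit. That said, several steps you describe as routine are the real content of Bowditch's paper: that the cut-pair structure on a Peano continuum without global cut points organises into a simplicial tree (the JSJ tree) is a substantial point-set argument, and the identification of necklace stabilisers with bounded Fuchsian groups (with the correct peripheral structure) relies on the Tukia--Gabai--Casson--Jungreis convergence-group theorem. If you were actually writing this up rather than citing it, those are the places where the work lies; your sketch acknowledges this in the final paragraph but perhaps underestimates the length of the ``careful point-set arguments''. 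For the purposes of the present paper, simply citing Bowditch is the appropriate move.
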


In the above statement, a cyclic splitting of $\Gamma$ is called \emph{canonical} if it has a common refinement with any other cyclic splitting of $\Gamma$, where a \emph{refinement} of a splitting is obtained by taking further splittings of vertex groups so that the images of attaching maps in the original splitting are still contained in vertex groups. In this sense, a canonical splitting contains information about any cyclic splitting of the group. The splitting in the JSJ decomposition is well-defined because it is the deepest canonical splitting possible; it has no further canonical refinement.

\begin{remark}
	In Bowditch's original statement, it is taken as a condition that $\Gamma$ is not a cocompact Fuchsian group, which, in the torsion-free case, is the same thing as a closed hyperbolic surface group. We do not need to rule out this case, but note that its JSJ decomposition consists of just one vertex group, of type S, with no incident edge groups. Indeed, a closed surface group cannot have a non-trivial canonical splitting---each cyclic splitting corresponds to a simple closed curve on the surface, and given any such curve, we can always choose another simple closed curve which cannot be homotoped disjoint from it. Then the two splittings defined by these two curves can never admit a common refinement.
\end{remark}

We would like to apply our gluing theorem to the JSJ decomposition of a hyperbolic group. For our theorem to apply, we will need to assume (as before) that the type M vertex spaces have \cat{-1} dimension 2; however, no further assumptions are required due to the following fact:

\begin{lemma}\label{lemma:jsjmalnormal}
	For each type M or type S vertex group in the JSJ decomposition of a hyperbolic group $\Gamma$, the images of incident edge groups form a malnormal family of subgroups.
\end{lemma}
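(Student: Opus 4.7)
The plan is to split the analysis into the two cases, type S (surface) vertices and type M (rigid) vertices, exploiting in both cases that $\Gamma$ is torsion-free hyperbolic. The guiding observation is that any maximal infinite cyclic subgroup $C$ of a torsion-free hyperbolic group is automatically malnormal: if $gc^{p}g^{-1}=c^{q}$ for some non-trivial $c^{p}\in C$, comparing translation lengths forces $|p|=|q|$, so $g$ normalises $\langle c^{p}\rangle$; but the normaliser of an infinite cyclic subgroup of a torsion-free hyperbolic group coincides with its centraliser (otherwise a Klein-bottle relation, and hence a $\mathbb{Z}^{2}$ subgroup, would appear), which in turn is the unique maximal cyclic subgroup containing $c^{p}$. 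This reduces the individual part of malnormality to showing that each incident edge group is maximal cyclic in its vertex group.

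For the type S case, $V=\pi_{1}(\Sigma)$ is the fundamental group of a non-elementary hyperbolic surface with boundary, and the incident edge subgroups are the cyclic subgroups generated by the boundary curves. Fixing a hyperbolic metric on $\Sigma$ with totally geodesic boundary and lifting to $\mathbb{H}^{2}$, each boundary subgroup is the stabiliser of a component of the lifted boundary and is therefore maximal cyclic in $V$. Distinct boundary components of $\Sigma$ lift to $V$-inequivalent geodesics, so the associated cyclic subgroups are pairwise non-conjugate in $V$ (any conjugator would send one axis to the other); combined with individual malnormality this gives the malnormal family property.

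For the type M case, I would argue that both maximality of each edge group and pairwise non-conjugacy in $V$ follow from the canonical (universally refining) character of the JSJ decomposition asserted in Theorem \ref{thm:jsj}. Suppose $E_{1},E_{2}$ are incident edge subgroups (possibly equal) and some non-trivial element of $gE_{1}g^{-1}$ lies in $E_{2}$ for $g\in V$. The translation-length argument above forces the maximal cyclic subgroups of $\Gamma$ containing $gE_{1}g^{-1}$ and $E_{2}$ to coincide in a single cyclic subgroup $C$. If $E_{i}$ is properly contained in $C$, replacing $E_{i}$ by $C$ yields a strict cyclic refinement of the JSJ splitting, contradicting its maximality. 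If instead $E_{1}=E_{2}=C$ but $g\notin E_{1}$, or $E_{1}\neq E_{2}$ become conjugate inside $V$, then folding the two incident edges produces a strictly simpler cyclic splitting of $\Gamma$ which cannot admit a common refinement with the JSJ, contradicting canonicity.

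The principal obstacle is making the type M case rigorous: one has to verify that the operations described (enlarging an edge group, or folding two $V$-conjugate edges) really do yield cyclic splittings incompatible with the JSJ in the sense of Theorem \ref{thm:jsj}. In practice this is likely cleanest to extract directly from Bowditch's construction in \cite{bowditch98}, where edge groups are built from pairwise disjoint splitting pairs on $\partial\Gamma$, rather than derived from the abstract canonicity alone.
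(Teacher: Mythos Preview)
Your type S argument is correct and in fact more detailed than the paper's, which simply asserts that boundary subgroups of a non-elementary hyperbolic surface with boundary form a malnormal family.

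For type M, however, the argument you sketch does not go through, and the obstacle is more serious than a lack of rigour. Two points in particular fail. First, enlarging an edge group $E_i$ to the maximal cyclic $C$ containing it is not a \emph{refinement} of the splitting in any standard sense (refinements split vertex groups further; they do not enlarge edge groups), and there is no ``maximality'' property of the JSJ that this would contradict---the JSJ is the deepest \emph{canonical} splitting, not one with maximal edge groups. Second, if two incident edge groups at $V$ are $V$-conjugate, folding the corresponding edges yields a splitting which is \emph{dominated by} the original JSJ tree, so the two certainly have a common refinement (namely the JSJ itself); canonicity is not contradicted. So neither step produces the claimed incompatibility.

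The paper bypasses all of this by citing the fact (due to Guirardel--Levitt, building on Bowditch) that the action of $\Gamma$ on the JSJ tree is $2$-acylindrical. The underlying structural reason is that in Bowditch's construction the type N vertex groups are maximal cyclic in $\Gamma$ and each edge group equals its adjacent type N vertex group; hence a nontrivial intersection of two edge stabilisers at a type M vertex would produce an element fixing an arc of length at least $3$ in the tree, contradicting $2$-acylindricity. This is essentially the route you suggest in your last paragraph---extracting the result from Bowditch's explicit boundary construction rather than from abstract canonicity---and it is indeed what is needed. Your instinct there is right; the abstract universal property alone is not enough.
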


\begin{proof}
	In the type S case, this follows from Theorem \ref{thm:jsj}---in particular, the subgroups concerned are the subgroups generated by the boundary components, and these always form a malnormal family. In the type M case, it follows from the proof of the fact that the action of $\Gamma$ on the Bass--Serre tree corresponding to the JSJ decomposition is 2-acylindrical (see \cite{guirardellevitt11}).
\end{proof}

The next proposition then follows quickly from Theorem \ref{thm:main} and Lemma \ref{lemma:jsjmalnormal}. 


\begin{proposition}\label{prop:jsjnc}
	Let $\Gamma$ be a torsion-free, one-ended hyperbolic group. Suppose all the type M vertex groups in the JSJ decomposition of $\,\Gamma$ are fundamental groups of (compact) negatively curved 2-complexes. Then $\Gamma$ is also the fundamental group of a (compact) negatively curved 2-complex.
\end{proposition}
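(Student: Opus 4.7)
The plan is to apply Theorem \ref{thm:main} directly to the JSJ decomposition of $\Gamma$. First I would turn the JSJ graph of groups into a graph of spaces $X$ by choosing vertex spaces as follows. For each type M vertex group, take the compact negatively curved 2-complex given by hypothesis. For each type S vertex group, take a compact hyperbolic surface (with totally geodesic boundary if the surface has boundary), suitably triangulated; this is itself a compact negatively curved simplicial 2-complex, so from the point of view of Theorem \ref{thm:main} it plays the role of a type M vertex space. For each type N vertex group, take a simplicial circle. Since $\Gamma$ is one-ended every JSJ edge group is infinite cyclic, so every edge space is a simplicial circle.

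Next I would verify the hypotheses of Theorem \ref{thm:main}. Conditions (1) and (2) are immediate from the construction. For condition (3), Theorem \ref{thm:jsj} guarantees that no two JSJ vertex groups of the same type are adjacent, so after relabelling type S as type M the only edges that potentially fail condition (3) are the (former) S--M edges, which now join two type M vertex spaces; by Remark \ref{rmk:thmconds} we may subdivide each such edge by inserting a circular (type N) vertex space, without changing the fundamental group. Condition (4) can be arranged up to free homotopy of the attaching maps, again by Remark \ref{rmk:thmconds}, since every vertex space is now negatively curved and the attaching maps are $\pi_1$-injective. Condition (5), malnormality of the family of incident edge subgroups at each (new) type M vertex, is precisely the content of Lemma \ref{lemma:jsjmalnormal}, which covers both the original type M vertex groups and the type S vertex groups.

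Applying Theorem \ref{thm:main} to this $X$ then produces a negatively curved simplicial 2-complex $\bar{X}$ homotopy equivalent to $X$, so with $\pi_1 \bar{X} \cong \Gamma$. Since every vertex space of $X$ was chosen to be compact and the JSJ (and its subdivision) is a finite graph, the ``moreover'' clause of Theorem \ref{thm:main} gives $\bar{X}$ compact, so $\bar{X}$ is the required compact $K(\Gamma,1)$.

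The main obstacle is not any one computation but checking that the ingredients line up cleanly: that the surface (type S) vertex groups can be realised as negatively curved simplicial 2-complexes with boundary loops that can be made simplicial closed geodesics matching the circular edge spaces, and that Lemma \ref{lemma:jsjmalnormal} genuinely applies to every vertex we have declared to be type M, including the surface vertices. Once this bookkeeping is in place the proposition follows immediately from Theorem \ref{thm:main}, with no further curvature argument needed.
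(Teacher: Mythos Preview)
Your proposal is correct and follows essentially the same route as the paper's proof: treat type S vertex spaces as type M negatively curved 2-complexes, insert an extra type N circle on any S--M edge so that condition (3) holds, invoke Remark~\ref{rmk:thmconds} for condition (4) and Lemma~\ref{lemma:jsjmalnormal} for condition (5), and then apply Theorem~\ref{thm:main}. The paper's argument is exactly this, with the same bookkeeping and no additional ideas.
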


\begin{proof}
	We construct a graph of spaces corresponding to the JSJ decomposition of $\Gamma$. By definition of the JSJ decomposition, we may choose a circle for each edge space. By assumption, we can choose (compact) negatively curved 2-complexes for the type M vertex spaces, and we can clearly also choose (compact) negatively curved 2-complexes for the type S vertex spaces. We choose a circle for each type N vertex space. If a type M and type S vertex space are adjacent, we insert an additional type N vertex space in between, so that each type M or S vertex space is then only adjacent to type N vertex spaces. Thus the first three conditions of Theorem \ref{thm:main} hold (where type S vertex spaces are included in the type M vertex spaces of Theorem \ref{thm:main}).  Remark \ref{rmk:thmconds} implies that we may assume that the images of attaching maps are then simplicial closed geodesics, hence condition \ref{item:li} also holds, and Lemma \ref{lemma:jsjmalnormal} gives condition \ref{item:malfam}. Hence Theorem \ref{thm:main} applies, and the result follows.
\end{proof}

In the JSJ decomposition, we do not necessarily know any more about the type M vertex groups of $\Gamma$ than we know about $\Gamma$ itself. In particular, they may themselves have a non-trivial JSJ decomposition, or even a free decomposition. However, we may appeal to the following theorem (\cite{loudertouikan13}, see also \cite{delzantpotyagailo01}):

\begin{theorem}[Strong Accessibility Theorem]
	Let $\Gamma$ be a torsion-free hyperbolic group. Consider the hierarchy obtained by taking either the free (if freely decomposable) or JSJ decomposition of $\,\Gamma$, and then taking a free or JSJ decomposition of the resulting vertex groups, and so on. Then this hierarchy is finite.
\end{theorem}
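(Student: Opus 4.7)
The plan is to establish termination of the hierarchy via a well-founded complexity invariant assigned to each group that appears in it, and to show that this invariant strictly decreases at every decomposition step. The argument splits naturally into handling the free (Grushko) steps and the cyclic (JSJ) steps separately, and then combining them.

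For the free steps, I would invoke Grushko's theorem: the ranks of the factors in any free decomposition of a finitely generated group sum to the rank of the original, so only finitely many consecutive free decompositions can occur before each factor is one-ended. Hence termination of the full hierarchy reduces to bounding the ``JSJ depth''---the longest nested sequence of JSJ decompositions, possibly with finitely many free decompositions interleaved between them.

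For the JSJ step, I would assign to each pair $(G,\mathcal{P})$ consisting of a torsion-free hyperbolic group $G$ together with a \emph{peripheral structure} $\mathcal{P}$ (a finite collection of conjugacy classes of cyclic subgroups, representing incident edge groups at that stage of the hierarchy) a lexicographically ordered complexity of the form
\[
c(G,\mathcal{P}) = \bigl(-\chi(G,\mathcal{P}),\; \mathrm{rk}(G),\; n(G,\mathcal{P})\bigr),
\]
where $\chi$ is a relative Euler characteristic that aggregates the contribution of the type S pieces and $n(G,\mathcal{P})$ counts the type M vertex groups in the relative JSJ. The goal would then be to show that every type M vertex group $V$ appearing in the JSJ of $(G,\mathcal{P})$, equipped with its induced peripheral structure $\mathcal{P}_V$ of incident edge groups, satisfies $c(V,\mathcal{P}_V) < c(G,\mathcal{P})$ in the lexicographic order, and that the analogous inequality holds for factors of a free decomposition via the rank drop.

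The main obstacle is precisely the inequality $c(V,\mathcal{P}_V) < c(G,\mathcal{P})$. The type M vertex groups are rigid with respect to splittings compatible with $\mathcal{P}_V$, yet in isolation they may admit rich new splittings that become visible only once the peripheral structure is altered. The key idea, following Delzant--Potyagailo and sharpened by Louder--Touikan through Dunwoody resolutions, is to exploit canonicity of the JSJ: any non-trivial cyclic splitting of $V$ respecting $\mathcal{P}_V$ would assemble with the parent JSJ into a proper canonical refinement, contradicting the JSJ's definition as the deepest canonical splitting. Thus any genuinely new splitting must strictly enlarge the peripheral structure, forcing a drop in either $\chi$ or $\mathrm{rk}$. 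The most delicate technical point---and where the combinatorial machinery of Dunwoody resolutions becomes indispensable---is tracking how newly introduced peripheral subgroups interact across alternating free and cyclic steps, ensuring that the complexity genuinely descends under iteration rather than oscillating.
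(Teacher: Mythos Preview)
The paper does not prove this theorem at all: it is quoted verbatim as an external result of Louder--Touikan (with a parallel reference to Delzant--Potyagailo), and is then used as a black box to deduce Proposition~\ref{prop:jsjrigidsubgp}. So there is no ``paper's own proof'' to compare your proposal against.

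As for the sketch itself, it has the right shape---find a well-founded complexity that drops at every decomposition step---but the specific invariant you propose does not do the job. First, $\chi(G,\mathcal{P})$ is never defined; for a general torsion-free hyperbolic group there is no canonical Euler characteristic (the group need not have a finite $K(G,1)$ of any particular form), and ``aggregating the contribution of the type~S pieces'' presupposes you already know the JSJ of $G$, which makes the invariant circular. Second, there is no reason for $\mathrm{rk}(V)\le\mathrm{rk}(G)$ when $V$ is a vertex group of a cyclic (as opposed to free) splitting of $G$; Grushko gives nothing here. Third, and most seriously, the step ``any genuinely new splitting must strictly enlarge the peripheral structure, forcing a drop in either $\chi$ or $\mathrm{rk}$'' is a non sequitur: you correctly observe that $V$ is rigid \emph{relative} to $\mathcal{P}_V$, so any further splitting of $V$ must be incompatible with $\mathcal{P}_V$, but you give no mechanism linking that incompatibility to a decrease in your complexity. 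The actual arguments in the cited references use rather different invariants---Delzant--Potyagailo work with a complexity coming from a finite $2$-complex (essentially counting $2$-cells in a presentation complex and controlling how tracks cut it), while Louder--Touikan argue via trees of cylinders and a careful analysis of how JSJ decompositions of vertex groups interact with the ambient one. Your lexicographic triple is not a substitute for either.
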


\begin{definition}
	A torsion-free hyperbolic group is called \emph{rigid} if it does not have a non-trivial free or cyclic splitting.
\end{definition}

The Strong Accessibility Theorem says that if we continue decomposing vertex spaces using free products or JSJ decompositions, we must eventually terminate at a decomposition whose vertex groups are rigid (note that vertex groups are always hyperbolic---this is clear for free decompositions, and in the JSJ case is given by Theorem \ref{thm:jsj}). In the context of our gluing theorem, it implies the following 


\begin{proposition}\label{prop:jsjrigidsubgp}
	A hyperbolic group $\Gamma$ has \cat{-1} dimension 2 if each rigid subgroup of $\Gamma$ has \cat{-1} dimension 2.
\end{proposition}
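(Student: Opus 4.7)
The plan is to combine the Strong Accessibility Theorem with an induction on the depth of the resulting hierarchy, using Proposition \ref{prop:jsjnc} at each cyclic splitting and a simple wedge construction at each free splitting.

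First I would invoke the Strong Accessibility Theorem to obtain a finite hierarchy for $\Gamma$: at each non-leaf node, a group is decomposed either as a non-trivial free product or via a JSJ decomposition, and the leaves are rigid subgroups. By hypothesis, each leaf has \cat{-1} dimension $2$, so each leaf is the fundamental group of a compact negatively curved simplicial $2$-complex. I would then perform a bottom-up induction on this hierarchy, showing that every group appearing at a node has \cat{-1} dimension $2$, so in particular the root $\Gamma$ does.

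For the inductive step at a free splitting $G \cong G_1 * \dotsb * G_k * F_r$, one takes the wedge $K \coloneqq K_1 \vee \dotsb \vee K_k \vee W_r$ of compact negatively curved $2$-complexes $K_i$ provided by the inductive hypothesis together with a wedge $W_r$ of $r$ circles. The complex $K$ is a compact $K(G,1)$ of dimension at most $2$; the link of the wedge point is simply the disjoint union of the links of the corresponding vertices in the $K_i$ (plus a finite set of isolated points from the circles), hence contains no new essential short loops and is still \cat{1} by Remark \ref{rmk:2dcat-1}. So $K$ is negatively curved, and $G$ has \cat{-1} dimension $2$. For the inductive step at a JSJ splitting, I would first reduce to the one-ended case (free splittings having already been accounted for), and then apply Proposition \ref{prop:jsjnc} directly: the type M vertex groups of the JSJ decomposition are precisely the groups one level deeper in the hierarchy (excluding surface and cyclic groups), so by the inductive hypothesis they are fundamental groups of compact negatively curved $2$-complexes, and Proposition \ref{prop:jsjnc} then gives such a complex for $G$.

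The main obstacle is essentially bookkeeping rather than geometry: one must verify that the definition of "rigid subgroup" used in the hypothesis matches the class of groups that appear as leaves of the Strong Accessibility hierarchy, and check that every non-leaf node really admits either a non-trivial free or JSJ splitting to which the inductive step applies. Once this is set up correctly, the geometric content is entirely absorbed into the free-product wedge (which is trivial) and Proposition \ref{prop:jsjnc} (which does all the real work via Theorem \ref{thm:main}).
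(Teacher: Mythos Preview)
Your proposal is correct and is exactly the argument the paper intends: the paper does not write out a proof, but simply states the proposition immediately after the Strong Accessibility Theorem with the phrase ``In the context of our gluing theorem, it implies the following'', leaving the induction over the hierarchy (free-product wedges at free splittings, Proposition~\ref{prop:jsjnc} at JSJ splittings) implicit. Your write-up faithfully supplies those details; the only cosmetic point is that, as in the paper, ``\cat{-1} dimension $2$'' should be read as ``at most $2$'' so that free and trivial leaves are covered.
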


\subsection{Graphs of free groups with cyclic edge groups}

In the above subsection, we saw how the gluing theorem can be applied to JSJ decompositions of hyperbolic groups. However, we could only conclude that a group was \cat{-1} if the vertex groups in the JSJ decomposition were \cat{-1} (of dimension 2), which is a strong requirement. Here we describe a context where this requirement is met. 

In \cite{hsuwise10}, Hsu and Wise show that a group $G$ which splits as a finite graph of finitely generated free groups with cyclic edge groups is \cat{0} if and only if it contains no ``non-Euclidean Baumslag--Solitar subgroups'' (subgroups of the form $\left< a, t \; \mid \; t^{-1}a^pt=a^q\right>$ for \mbox{$q\neq \pm p$}). Indeed, under these conditions, they show that the group acts geometrically on a \cat{0} cube complex---a property which has countless interesting consequences (see \cite{wisenotes, wisemanu} for details and further references). However, their methods give little control over the \cat{0} dimension. If $G$ is also hyperbolic (so that it contains no Baumslag Solitar subgroups at all), then we may improve their result in two ways: firstly, showing $G$ is in fact \cat{-1}, and secondly, showing that the \cat{-1} dimension (and hence the \cat{0} dimension) is equal to 2.

\begin{theorem}\label{thm:gofg}
	Let $G$ be a hyperbolic group which splits as a finite graph of finitely generated free groups with cyclic edge groups. Then $G$ has \cat{-1} dimension 2.
\end{theorem}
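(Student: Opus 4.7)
The plan is to invoke Proposition \ref{prop:jsjrigidsubgp}, reducing the problem to identifying the rigid subgroups of $G$. Let $R$ be any rigid subgroup of $G$.

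First I would show that $R$ must act elliptically on the Bass--Serre tree $T$ of the given graph of free groups decomposition of $G$. If not, then $R$ has a minimal invariant subtree $T_R \subset T$ with no global fixed point, and Bass--Serre theory gives a non-trivial graph of groups decomposition of $R$. The edge groups of this decomposition are intersections of $R$ with cyclic edge stabilisers of the $G$-action, and are therefore themselves cyclic (possibly trivial). Such a splitting is either a non-trivial cyclic splitting or a non-trivial free splitting of $R$, both of which are forbidden by rigidity.

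Consequently $R$ is contained in a conjugate of a vertex group of the splitting, and so $R$ is a finitely generated free group. Since any free group of rank at least two splits freely, rigidity now forces $R$ to be trivial or infinite cyclic. Both of these admit compact \cat{-1} classifying spaces of dimension at most $1$ (a point or a circle), which we may regard as degenerate $2$-dimensional negatively curved simplicial complexes with no $2$-simplices. Proposition \ref{prop:jsjrigidsubgp} then yields the result.

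The substantive technical work has already been absorbed into Lemma \ref{lemma:jsjmalnormal}, Proposition \ref{prop:jsjnc} and Proposition \ref{prop:jsjrigidsubgp}, so the only real content here is the easy Bass--Serre observation above. The mildest subtlety is the convention that a $1$-complex is counted as a $2$-complex with empty set of $2$-simplices; this is needed because the rigid subgroups of $G$ never have genuinely $2$-dimensional \cat{-1} classifying spaces, but the $2$-dimensionality of the resulting complex is already supplied by the annuli and surface pieces produced by the gluing construction of Theorem \ref{thm:main}.
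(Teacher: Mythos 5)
Your argument is essentially the paper's proof: both reduce via Proposition \ref{prop:jsjrigidsubgp} to the rigid subgroups, note that rigidity forces the splitting induced by the given graph-of-free-groups decomposition to be trivial (you spell this out with the Bass--Serre tree, the paper states it directly), and conclude that a rigid subgroup lies in a free vertex group and hence, being unable to split, is (essentially) trivial. The only cosmetic difference is that the paper concludes the rigid subgroup is trivial outright, whereas you also allow $\mathbb{Z}$; either way the degenerate classifying space observation you make is exactly the one the paper relies on.
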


To prove this, it is tempting to try to apply Theorem \ref{thm:main} directly to the graph of spaces corresponding to the given graph of free groups. However, it may not be possible to ensure that incident edge groups form malnormal families in vertex groups. We circumvent this difficulty by appealing to the JSJ machinery of the previous subsection.

\begin{proof}[Proof of Theorem \ref{thm:gofg}]
We would like to apply Proposition \ref{prop:jsjrigidsubgp}, and so we need to check that each rigid subgroup of $G$ has \cat{-1} dimension 2. So, let $H$ be a rigid subgroup of $G$, and consider the splitting of $H$ induced by the decomposition of $G$ as a graph of free groups with cyclic edge groups. Since $H$ is rigid, this induced splitting must consist of a single vertex group, so $H$ is a subgroup of a vertex group of the original splitting. Hence $H$ is free (indeed, since $H$ cannot split freely, it is trivial), and so certainly has \cat{-1} dimension 2, as required.
\end{proof}

\section{Remarks and acknowledgements}

\begin{remark}\label{rmk:malfam}
	We expect that the malnormal family assumption we make in our main theorem can be relaxed---the primary reason to impose it is that it is a common assumption in the literature (see section \ref{ss:malfam}). Indeed, it is used only to show that it is possible to make the corresponding set of closed geodesics transverse (Lemma \ref{lemma:transverse}), which in turn is used only to ensure that there is a safe place to attach the corner of a hyperbolic annulus (see the proof of Theorem \ref{thm:main}). It is easy to design a hyperbolic annulus with \emph{reflex} angle of, say, $\pi+\delta_1$ in the geodesic side, and an angle in the other side of $\pi-\delta_2$, provided $\delta_2>\delta_1$. Using such annuli for edge space cylinders, we may attach multiple annuli along the same closed geodesic in a vertex space. It may be that such a technique allows us to replace the malnormal family assumption with a weaker assumption, reminiscent of the ``annuli flare condition'' used in \cite{bestvinafeighn92}.
\end{remark}

\begin{remark}\label{rmk:error}
	Remark \ref{rmk:compsimplex} does not directly generalise to $n$-dimensional simplices, as pointed out in the introduction of \cite{charneydavis95}. Indeed, there exist hyperbolic 3-simplices whose 1-skeleton is not the 1-skeleton of any Euclidean 3-simplex (nor any 3-simplex of negative curvature $k$ very close to zero). Moreover, even when comparison simplices \emph{can} be found (which can be ensured by choosing $k'$ sufficiently close to $k$), the dihedral angles may \emph{decrease} when passing to a comparison simplex of lesser negative curvature (i.e. $k<k'<0$). This means that Remark \ref{rmk:angleexcess} may not hold in dimensions $>2$, and consequently our proof of Theorem \ref{thm:main} is valid only in two dimensions. However, it may be possible to prove Theorem \ref{thm:main} for higher dimensions, using a similar construction to that in \cite{charneydavis95}. Here, the appropriate generalisation of ``excess angle'' to dimensions $>2$ is the notion of a complex with \emph{extra large links}. An $M_k$-complex is said to have extra large links if the systole of the link of each vertex is strictly greater than $2\pi$, and moreover the links themselves are complexes with extra large links. This is a strictly stronger version of the link condition, with the useful feature that it is stable under small perturbations of the metric (see \cite{moussongthesis} or \cite{davis08}).
\end{remark}

\begin{remark} We have indicated the applicability of our method to families of hyperbolic groups built hierarchically, namely limit groups, and graphs of free groups with cyclic edge groups (via the JSJ decomposition). There are many other families of hyperbolic groups built in a similarly hierarchical way---for example, hyperbolic special groups (see \cite{wisenotes, wisemanu})---and these may lend themselves to investigation using similar techniques. In future work, we intend to find a more general \cat{-1} gluing theorem, valid for larger classes of graphs of spaces---for example, the case where the edge groups are not cyclic; where the vertex spaces are of dimension $>2$ (as discussed above), or where the groups considered are allowed to have torsion.
\end{remark}

We would like to thank Henry Wilton for many helpful suggestions and comments during the preparation of this work. We would also like to thank the anonymous referee for his or her many detailed comments and suggestions which improved the exposition of this paper.

\bibliographystyle{plain}
\bibliography{../phdbib}

\end{document}